\title[]{Local and global strong solutions for SQG in bounded domains}
\author{Peter Constantin}
\address{Department of Mathematics, Princeton University, Princeton, NJ 08544}
\email{const@math.princeton.edu}
\author{Huy Quang Nguyen}
\address{Program in Applied and Computational Mathematics, Princeton University, Princeton, NJ 08544}
\email{qn@math.princeton.edu}
\newcommand{\bq}{\begin{equation}}
\newcommand{\eq}{\end{equation}}
\newcommand{\bqa}{\begin{eqnarray*}}
\newcommand{\eqa}{\end{eqnarray*}}
\newcommand{\Rr}{\mathbb{R}}
\newcommand{\Zz}{\mathbb{Z}}
\newcommand{\Tt}{\mathbb{T}}
\newcommand{\la}{\label}
\newcommand{\na}{\nabla}
\newcommand{\be}{\begin{equation}}
\newcommand{\ee}{\end{equation}}
\newcommand{\ba}{\begin{array}{l}}
\newcommand{\ea}{\end{array}}
\theoremstyle{plain}
\newtheorem{theo}{Theorem}[section]
\newtheorem{prop}[theo]{Proposition}
\newtheorem{lemm}[theo]{Lemma}
\theoremstyle{definition}
\newtheorem{rema}[theo]{Remark}
\DeclareMathOperator{\cnx}{div}
\DeclareMathOperator{\cn}{div}
\DeclareMathOperator{\dist}{dist}
\DeclareMathOperator{\supp}{supp}
\DeclareSymbolFont{pletters}{OT1}{cmr}{m}{sl}
\DeclareMathSymbol{s}{\mathalpha}{pletters}{`s}
\def\tt{\theta}
\def\eps{\varepsilon}
\def\na{\nabla}
\def\la{\left\lvert}
\def\le{\leq}
\def\L#1{\langle #1 \rangle}
\def\mez{\frac{1}{2}}
\def\k{\kappa}
\def\ra{\right\rvert}
\def\tdm{\frac{3}{2}}
\def\P{\mathbb P}
\def\L{\Lambda}
\def\cF{\mathcal{F}}
\numberwithin{equation}{section}
\date{today}
\begin{document}
\begin{abstract}
We prove local well-posedness for the inviscid surface quasigeostrophic (SQG) equation in bounded domains of $\Rr^2$. When fractional Dirichlet Laplacian dissipation is added, global existence of strong solutions is obtained for small data for critical and supercritical cases. Global existence of strong solutions with arbitrary data is obtained in the subcritical cases.
\end{abstract}

\keywords{SQG, local well-posedness, global strong solutions, bounded domains}

\noindent\thanks{\em{ MSC Classification:  35Q35, 35Q86.}}

\begin{center}{\em To Edriss Titi, with friendship, respect and admiration}\end{center}

\maketitle
\section{Introduction}
Let $\Omega\subset \Rr^2$ be an open bounded set with smooth boundary. The surface quasigeostrophic (SQG) equation  in $\Omega$ is the equation
\bq\label{SQG}
\partial_t\tt  +u\cdot \nabla\tt +\k \L^{2\alpha}\tt=0,\quad\alpha\in(0,1),~\k\ge 0,
\eq
where 
\[
\L:=\sqrt{-\Delta}.
\]
The Laplacian $-\Delta$ above has homogeneous Dirichlet boundary conditions,
and the equation is an active scalar equation: the scalar $\tt = \tt(x,t)$ determines  $u = u(x,t)$ for  $(x, t)\in \Omega\times [0, \infty)$ by
\bq\label{u:defi}
u = R_D^\perp \tt :=\nabla^\perp \L^{-1}\tt.
\eq
The nonnegative number $\k$ distinguishes between the dissipative SQG equation \eqref{SQG}, when $\k>0$, and the inviscid SQG equation when $\k=0$.

The domain of the Laplacian $-\Delta$ with homogeneous Dirichlet boundary conditions is  
\[
D(-\Delta)=H^2(\Omega)\cap H^1_0(\Omega),
\]
and the fractional Laplacian $\L^s$, $s\ge 0$ is defined using eigenfunction expansions. The domain of definition of the fractional Laplacian, $D(\L^s)$ is endowed with a natural norm $\Vert\cdot\Vert_{s, D}$ and is a Hilbert space (see section \ref{pre} below for details). In particular, the norm of $D(\L^2)= D(-\Delta)$ is equivalent to the $H^2(\Omega)$ norm.

The main results of this paper concerning the dissipative SQG equation are the local well-posedness for the whole range of $\alpha\in (0, 1)$ for arbitrary data in $D(\L^2)$ and the existence of unque global solutions for small data in $D(\L^2)$.
\begin{theo}\label{main1}
Let $\alpha\in (0, 1)$ and $\k>0$. Let $\tt_0\in D(\L^2)$ be an initial datum.

1. There exists a constant $M$ depending only on $\alpha$, such that, on
 the time interval $[0, T]$, with
\[
T=\frac{\k}{M\Vert \tt_0\Vert^2_{2, D}},
\]
\eqref{SQG}  has a unique solution in 
\[
\tt\in L^\infty\big([0, T]; D(\L^2)\big)\cap L^2\big([0, T]; D(\L^{2+\alpha})\big).
\]

2. There exists a positive constant $C$ depending only on $\alpha$ such that the following holds: if 
\[
\Vert\tt_0\Vert_{2, D}< \frac{\k}{C}
\]
then there exists a unique global-in-time solution
\[
\tt\in L^\infty\big([0, \infty); D(\L^2)\big)\cap L^2_{\text{loc}}\big([0, \infty); D(\L^{2+\alpha})\big)
\]
of \eqref{SQG}. Moreover, the $D(\L^2)$ norm of $\tt$ is bounded by its initial value:
\[
\Vert \tt(t, \cdot)\Vert_{2, D}\le \Vert \tt_0\Vert_{2, D}\quad\text{a.e.}~t\ge 0.
\]
\end{theo}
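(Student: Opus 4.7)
The plan is to obtain both assertions from a single a priori differential inequality for $y(t)=\|\tt(t)\|_{2,D}^{2}$ at the $D(\Lambda^{2})$ level, together with a standard approximation and compactness procedure. I would first regularize by a Galerkin scheme in the Dirichlet eigenfunctions of $-\Delta$: on each truncated space, $u^{(n)}=R_{D}^{\perp}\tt^{(n)}$ is smooth, divergence free, and tangent to $\partial\Omega$ (because $\Lambda^{-1}\tt^{(n)}\in D(\Lambda)\subset H_{0}^{1}$ so $\nabla\Lambda^{-1}\tt^{(n)}$ is normal to $\partial\Omega$), and solvability is an ODE statement.

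The heart of the argument is a master estimate obtained by testing \eqref{SQG} against $\Lambda^{4}\tt$:
\begin{equation*}
\tfrac{1}{2}\tfrac{d}{dt}\|\tt\|_{2,D}^{2}+\k\|\tt\|_{2+\alpha,D}^{2}
=-\langle [\Lambda^{2},u\cdot\nabla]\tt,\Lambda^{2}\tt\rangle-\langle u\cdot\nabla\Lambda^{2}\tt,\Lambda^{2}\tt\rangle.
\end{equation*}
The second inner product vanishes after integration by parts thanks to $\cn u=0$ and $u\cdot n=0$. For the commutator I would combine the product/commutator estimates for $\Lambda^{s}$ on Dirichlet domains recalled in Section~\ref{pre} with the $D(\Lambda^{s})$-boundedness of $R_{D}^{\perp}$ to derive
\begin{equation*}
|\langle [\Lambda^{2},u\cdot\nabla]\tt,\Lambda^{2}\tt\rangle|\le C\|\tt\|_{2,D}^{2}\|\tt\|_{2+\alpha,D},
\end{equation*}
so that Young's inequality absorbs half of the dissipation and leaves
\begin{equation*}
\dot y+\k\|\tt\|_{2+\alpha,D}^{2}\le \frac{M}{\k}y^{2}.
\end{equation*}

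Assertion~1 follows by discarding the dissipation and integrating the scalar Riccati $\dot y\le(M/\k)y^{2}$ up to time $T=\k/(M\|\tt_{0}\|_{2,D}^{2})$; the bound on $\|\tt\|_{2+\alpha,D}$ in $L^{2}_{t}$ is inherited, and Aubin--Lions plus weak-$\ast$ limits produce a solution in the stated space. Uniqueness follows from a Gr\"onwall estimate on $\|\tt_{1}-\tt_{2}\|_{L^{2}}$, using $\cn u_{i}=0$, the $L^{2}$-boundedness of $R_{D}^{\perp}$, and the embedding $D(\Lambda^{2})\hookrightarrow W^{1,\infty}$ to handle the forcing $(u_{1}-u_{2})\cdot\nabla\tt_{2}$. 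For assertion~2, the spectral Poincar\'e bound $y\le \lambda_{1}^{-\alpha}\|\tt\|_{2+\alpha,D}^{2}$ converts the master inequality into
\begin{equation*}
\dot y+\Big(\k-\tfrac{M\lambda_{1}^{-\alpha}}{\k}y\Big)\|\tt\|_{2+\alpha,D}^{2}\le 0,
\end{equation*}
so under $y(0)<\k^{2}/(M\lambda_{1}^{-\alpha})$ the bracket remains positive for all $t$, $y$ is non-increasing, and iterating part~1 gives the claimed global solution with the monotone bound.

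The genuinely delicate point — the main obstacle — is the commutator estimate for $[\Lambda^{2},u\cdot\nabla]$ in the Dirichlet spectral calculus: $\Lambda$ is nonlocal with no clean product rule near $\partial\Omega$, and $R_{D}^{\perp}$ is not a Fourier multiplier, so the $\Rr^{2}$ and $\Tt^{2}$ proofs do not transplant verbatim. Obtaining the bound by $\|\tt\|_{2,D}^{2}\|\tt\|_{2+\alpha,D}$ with the correct boundary behavior, so that Young's inequality closes at exactly the rate $\k\|\tt\|_{2+\alpha,D}^{2}$, is where the real analytic work lies.
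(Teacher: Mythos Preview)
Your overall architecture matches the paper's: Galerkin approximation in the Dirichlet eigenbasis, a $D(\Lambda^{2})$ energy identity obtained by testing with $\Lambda^{4}\tt$, a commutator bounded by $C\|\tt\|_{2,D}^{2}\|\tt\|_{2+\alpha,D}$, Young's inequality, a Riccati comparison for part~1, and a continuity argument for part~2. But three points need correction.

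\emph{The commutator is not a nonlocal $\Lambda^{s}$ commutator.} You describe the main obstacle as bounding $[\Lambda^{2},u\cdot\nabla]$ via ``product/commutator estimates for $\Lambda^{s}$ on Dirichlet domains'' and stress that ``$\Lambda$ is nonlocal with no clean product rule near $\partial\Omega$''. The paper's point is precisely the opposite: because $\Lambda^{2}=-\Delta$ is a local differential operator, one has the exact identity
\[
[\Delta,u\cdot\nabla]\tt=\Delta u\cdot\nabla\tt+2\nabla u\cdot\nabla\nabla\tt,
\]
and each piece is estimated in $L^{2}$ by H\"older with the pair $p=\tfrac{2}{1-\alpha}$, $q=\tfrac{2}{\alpha}$, Sobolev embeddings $H^{\alpha}\subset L^{p}$, $H^{1-\alpha}\subset L^{q}$, elliptic $W^{2,p}$ regularity, and the bounds $\|\Delta u\|_{L^{p}}=\|R_{D}^{\perp}\Delta\tt\|_{L^{p}}\le C\|\Delta\tt\|_{L^{p}}$, $\|R_{D}\tt\|_{H^{r}}\le C\|\tt\|_{r,D}$. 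No fractional commutator calculus is invoked. Relatedly, before applying $\Lambda^{2}$ to the nonlinearity you must check $u\cdot\nabla\tt\in D(\Lambda^{2})=H^{2}\cap H_{0}^{1}$; the paper does this via a trace lemma using that $\gamma_{0}(u)$ is tangential and $\gamma_{0}(\nabla\tt)$ is normal, so $\gamma_{0}(u\cdot\nabla\tt)=0$.

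\emph{Uniqueness.} In two dimensions $D(\Lambda^{2})\subset H^{2}(\Omega)$ does \emph{not} embed into $W^{1,\infty}(\Omega)$ (this is the borderline case), so your Gr\"onwall argument as written fails. The paper uses instead $\tt_{2}\in L^{2}([0,T];D(\Lambda^{2+\alpha}))\subset L^{2}([0,T];W^{1,\infty})$, which is exactly the extra regularity supplied by the dissipation.

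\emph{Constant in part~2.} Your Poincar\'e step $y\le\lambda_{1}^{-\alpha}\|\tt\|_{2+\alpha,D}^{2}$ produces a smallness constant depending on $\Omega$ through $\lambda_{1}$, whereas the theorem asserts $C=C(\alpha)$ only. The paper avoids this by working with the pre-Young inequality $\tfrac{d}{dt}A^{2}+\kappa B^{2}\le CBA^{(4-\alpha)/2}\|\tt\|_{L^{2}}^{\alpha/2}$ and using the scale-invariant interpolation $A^{(2+\alpha)/2}\le CB\|\tt\|_{L^{2}}^{\alpha/2}$ to rewrite the right-hand side as $CB^{2}A$, yielding $\tfrac{d}{dt}A^{2}\le CB^{2}(A-\kappa/C)$ with $C$ domain-independent.
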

The subcritical SQG equation \eqref{SQG} with $\alpha\in (\mez, 1)$ is globally well-posed, as in the case without boundaries:  
\begin{theo}\label{main3}
Let $\alpha\in (\mez, 1)$, $\k>0$, and $T>0$. Let $\tt_0\in D(\L^2)$ be an initial datum.  There exists a unique solution 
\bq\label{reg:sub:theo}
\tt\in L^\infty\big([0, T]; D(\L^2)\big)\cap L^2\big([0, T]; D(\L^{2+\alpha})\big)
\eq
of \eqref{SQG}. 
\end{theo}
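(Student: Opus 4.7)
Given Theorem \ref{main1}, which supplies a local solution together with a continuation time bounded below by $c\k/\Vert\tt(t)\Vert_{2,D}^2$, globalization reduces to establishing an a priori bound on $\Vert\tt(t)\Vert_{2,D}$ on every compact time interval. The subcritical dissipation $\alpha>\mez$ is precisely what makes this possible.

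I would begin with an $L^\infty$ maximum principle $\Vert\tt(t)\Vert_{L^\infty}\le \Vert\tt_0\Vert_{L^\infty}$. Since $u=\na^\perp\L^{-1}\tt$ is divergence-free and tangent to $\pa\Omega$ (because $\L^{-1}\tt$ has homogeneous Dirichlet data), the transport term integrates to zero when \eqref{SQG} is tested against $|\tt|^{p-2}\tt$, and the dissipative term is nonnegative by the C\'ordoba--C\'ordoba pointwise inequality for the Dirichlet fractional Laplacian, itself obtainable from the heat-semigroup representation of $\L^{2\alpha}$. Sending $p\to\infty$ yields the claim (and $L^p$ bounds for all finite $p$ along the way).

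The central step is the $D(\L^2)$ energy identity
\[
\mez\fr{d}{dt}\Vert\tt\Vert_{2,D}^2 + \k\Vert\tt\Vert_{2+\alpha,D}^2 = -\int_\Omega \L^2(u\cdot\na\tt)\,\L^2\tt\, dx.
\]
Using duality, the right-hand side is dominated by $\Vert u\cdot\na\tt\Vert_{2-\alpha,D}\Vert\tt\Vert_{2+\alpha,D}$; one then bounds $\Vert u\cdot\na\tt\Vert_{2-\alpha,D}$ via product estimates in the algebra $D(\L^{2-\alpha})$ (valid because $2-\alpha>1$ in dimension two), uses that $u$ and $\tt$ share the same smoothness (as $R_D^\perp$ has order zero relative to $\L$), and interpolates between a lower norm controlled by Step~1 and the top norm $\Vert\tt\Vert_{2+\alpha,D}$. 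The gain of $\alpha-\mez$ fractional derivatives beyond criticality permits one to close the estimate as
\[
\Bigl|\int_\Omega \L^2(u\cdot\na\tt)\,\L^2\tt\, dx\Bigr| \le \tfrac{\k}{2}\Vert\tt\Vert_{2+\alpha,D}^2 + C\bigl(\k,\alpha,\Vert\tt_0\Vert_{L^\infty}\bigr)\,\Vert\tt\Vert_{2,D}^N
\]
for some finite $N$, after Young's inequality. Gr\"onwall's lemma then furnishes a bound on $\Vert\tt(t)\Vert_{2,D}$ finite on every compact interval; combined with the local continuation criterion this yields a global solution with the regularity \eqref{reg:sub:theo}, and uniqueness is inherited from Theorem \ref{main1}.

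The main obstacle is the product/commutator estimate in the central step. On a bounded domain $R_D^\perp$ is not a convolution singular integral, $\L^s$ does not commute with $\na$, and the classical whole-space Kato--Ponce or C\'ordoba--C\'ordoba commutator estimates are not immediately available. Overcoming this will require a careful spectral analysis based on the eigenfunction representation of $\L^s$ together with the structural identity $u=\na^\perp\L^{-1}\tt$, to extract exactly the subcritical gain needed to absorb the nonlinearity into the dissipation.
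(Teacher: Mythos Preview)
Your strategy—local existence from Theorem~\ref{main1} plus an a priori $D(\L^2)$ bound closed via an $L^\infty$ (or $L^q$) maximum principle and interpolation—is the right shape, and you correctly flag the product/commutator estimate as the crux. But the paper does \emph{not} close the estimate directly at the $D(\L^2)$ level; it takes a different route that sidesteps exactly the obstacle you name.

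Your energy identity requires controlling $\Vert u\cdot\na\tt\Vert_{2-\alpha,D}$. Since $2-\alpha\in(1,\tdm)$, the space $D(\L^{2-\alpha})$ is \emph{not} covered by the identification \eqref{identify}, and the paper explicitly remarks that ``product rules for $\L^\beta(ab)$ with $\beta>1$ are not available'' in this setting. Your appeal to an ``algebra $D(\L^{2-\alpha})$'' is therefore the missing step, not a routine one: one would need $\Vert f\Vert_{2-\alpha,D}\le C\Vert f\Vert_{H^{2-\alpha}}$ for $f\in H^1_0$, which the paper does not prove. Without it, the best available bound at the $D(\L^2)$ level is the commutator estimate \eqref{commutator} already used for Theorem~\ref{main1}, and that yields $\fr{d}{dt}A^2\le C_\k A^{4-\alpha}\Vert\tt\Vert_{L^2}^\alpha$—a \emph{superlinear} inequality that blows up in finite time and cannot be globalized by Gr\"onwall. (A secondary issue: the Riesz transform $R_D^\perp$ is unbounded on $L^\infty$, so your $L^\infty$ bound on $\tt$ does not transfer to $u$; one must work with $L^q$ for finite $q$, as the paper does.)

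The paper's resolution is a two–stage argument. First (Lemma~\ref{lemm:nonlin}) it runs the subcritical energy estimate at the \emph{lower} level $s=1+\alpha$, so that $s-\alpha=1$ and $D(\L^{s-\alpha})=H^1_0(\Omega)$ by \eqref{identify}; there the product rule is available, the nonlinearity is bounded by $C\Vert\tt\Vert_{L^q}^N\Vert\tt\Vert_{s,D}^2$ plus absorbable terms, and a genuine linear Gr\"onwall yields a global solution in $L^\infty_tD(\L^{1+\alpha})\cap L^2_tD(\L^{1+2\alpha})$. Second, since $1+2\alpha>2$ one has $u\in L^2_tW^{2,p}$ for some $p>2$, and the paper bootstraps to the full $D(\L^2)$ regularity by invoking the linear advection–diffusion theory of Appendix~2 (Theorem~\ref{wp:ade}) with this known velocity. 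This indirection—lower–order global estimate followed by a linear upgrade—is the substantive idea your outline is missing.
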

The result of this paper concerning the inviscid SQG equation is the local well-posedness in a class of classical solutions.
\begin{theo}\label{wp:inviscid}
Let $p\in (2, \infty)$. For every $\tt_0\in H^1_0(\Omega)\cap W^{2, p}(\Omega)$, there exist $T=T(\Vert \tt_0\Vert_{H^1_0\cap W^{2, p}}, p)>0$ and unique solution 
\[
\tt\in L^\infty([0, T]; H^1_0(\Omega)\cap W^{2, p}(\Omega))
\]
 to \eqref{SQG} with $\k=0$.
\end{theo}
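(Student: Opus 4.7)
The plan is to take the inviscid limit of the dissipative subcritical equations already treated in Theorem~\ref{main3}, passing to the limit with a uniform $W^{2,p}$ bound. Fix some $\alpha'\in(\mez,1)$; for every $\epsilon>0$ Theorem~\ref{main3} supplies a unique global solution $\tt^\epsilon\in L^\infty([0,T];D(\L^2))\cap L^2([0,T];D(\L^{2+\alpha'}))$ to
\[
\partial_t\tt^\epsilon+u^\epsilon\cdot\nabla\tt^\epsilon+\epsilon\L^{2\alpha'}\tt^\epsilon=0,\qquad u^\epsilon=R_D^\perp\tt^\epsilon,\quad\tt^\epsilon|_{t=0}=\tt_0,
\]
and parabolic smoothing legitimizes the following computations at each fixed $\epsilon$; the task is to produce bounds uniform in $\epsilon$.

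The velocity $u^\epsilon=\nabla^\perp\L^{-1}\tt^\epsilon$ is divergence-free and, since $\L^{-1}\tt^\epsilon\in H^1_0(\Omega)$, tangent to $\partial\Omega$, so $\|\tt^\epsilon(t)\|_{L^p}\le\|\tt_0\|_{L^p}$. Applying $\Delta$ to the equation,
\[
\partial_t\Delta\tt^\epsilon+u^\epsilon\cdot\nabla\Delta\tt^\epsilon+\epsilon\L^{2\alpha'}\Delta\tt^\epsilon=-\Delta u^\epsilon\cdot\nabla\tt^\epsilon-2\,\nabla u^\epsilon:\nabla^2\tt^\epsilon,
\]
and testing against $|\Delta\tt^\epsilon|^{p-2}\Delta\tt^\epsilon$ eliminates the transport term and renders the dissipative term nonnegative. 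Invoking the mapping properties of $R_D^\perp$ on the Dirichlet-adapted Sobolev scale (which gains one derivative, hence $\|\Delta u^\epsilon\|_{L^p}\les\|\tt^\epsilon\|_{W^{1,p}}$), together with the Sobolev embedding $W^{2,p}(\Omega)\hookrightarrow C^{1,1-2/p}(\overline\Omega)$ valid in $\Rr^2$ for $p>2$, and the elliptic regularity $\|\tt^\epsilon\|_{W^{2,p}}\les\|\Delta\tt^\epsilon\|_{L^p}+\|\tt_0\|_{L^p}$, the right-hand side is dominated by $\|\tt^\epsilon\|_{W^{2,p}}^2\|\Delta\tt^\epsilon\|_{L^p}^{p-1}$. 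This closes as
\[
\fr{d}{dt}\|\tt^\epsilon\|_{W^{2,p}}\le C\,\|\tt^\epsilon\|_{W^{2,p}}^2,
\]
and yields a uniform bound in $L^\infty([0,T];W^{2,p})$ on an interval $T=T(\|\tt_0\|_{H^1_0\cap W^{2,p}},p)$.

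The equation then controls $\partial_t\tt^\epsilon$ in $L^\infty([0,T];L^p)$, so Aubin--Lions extracts a subsequence converging strongly in $C([0,T];W^{1,p})$ to some $\tt\in L^\infty([0,T];H^1_0\cap W^{2,p})$ that solves~\eqref{SQG} with $\k=0$. For uniqueness, the difference $\delta=\tt_1-\tt_2$ of two solutions satisfies $\partial_t\delta+u_1\cdot\nabla\delta+R_D^\perp\delta\cdot\nabla\tt_2=0$, and a standard $L^2$ test gives
\[
\fr{d}{dt}\|\delta\|_{L^2}^2\les\|\nabla\tt_2\|_{L^\infty}\|\delta\|_{L^2}^2\les\|\tt_2\|_{W^{2,p}}\|\delta\|_{L^2}^2,
\]
which closes by Gronwall. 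The main difficulty is the $W^{2,p}$ commutator estimate in the bounded domain: controlling $\|\nabla u^\epsilon\|_{L^\infty}$ and $\|\Delta u^\epsilon\|_{L^p}$ by $\|\tt^\epsilon\|_{W^{2,p}}$, and verifying the boundary-tangent character of $u^\epsilon$ with enough regularity to justify the integration by parts, requires Calder\'on--Zygmund bounds and elliptic regularity up to $\partial\Omega$ for the Dirichlet Laplacian, as well as the description of the domains $D(\L^s)$ developed in the preparatory sections; once those are in hand, the vanishing-viscosity argument itself is fairly routine.
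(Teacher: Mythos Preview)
Your vanishing-viscosity outline has the right general shape, and the uniqueness argument matches the paper's. There is, however, one minor error and one genuine gap.

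The minor point: $R_D^\perp=\nabla^\perp\Lambda^{-1}$ does \emph{not} gain a derivative; it is order zero. The bound $\|\Delta u^\epsilon\|_{L^p}\lesssim\|\tt^\epsilon\|_{W^{1,p}}$ is false. What is true (and still sufficient) is $\Delta u^\epsilon=R_D^\perp\Delta\tt^\epsilon$ and hence $\|\Delta u^\epsilon\|_{L^p}\le C\|\Delta\tt^\epsilon\|_{L^p}$ by the $L^p$ boundedness of $R_D$ on $\Omega$; together with $\|\nabla u^\epsilon\|_{L^\infty}\le C\|\tt^\epsilon\|_{W^{2,p}}$ (which requires the Cabr\'e--Tan type regularity $\Lambda^{-1}:C^{1,\gamma}\cap\{\gamma_0=0\}\to C^{2,\gamma}$, not just Calder\'on--Zygmund) one still closes the Riccati inequality. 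So this error is cosmetic.

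The real gap is the sentence ``parabolic smoothing legitimizes the following computations.'' Theorem~\ref{main3} only yields $\tt^\epsilon\in L^\infty_tD(\Lambda^2)\cap L^2_tD(\Lambda^{2+\alpha'})$, so $\Delta\tt^\epsilon\in L^2_tD(\Lambda^{\alpha'})$ with $\alpha'<1$. This is not enough to make sense of $\Lambda^{2\alpha'}\Delta\tt^\epsilon$ in $L^2$, nor to apply the C\'ordoba--C\'ordoba inequality to $\Delta\tt^\epsilon$ with exponent $p$ (which, for $\alpha'>\mez$, demands $\Delta\tt^\epsilon\in H^1_0\cap H^2$). You cannot repair this by going back to the Galerkin approximations in the proof of Theorem~\ref{main3}: the projection $\P_m$ in $\P_m(u_m\cdot\nabla\tt_m)$ does not commute with the nonlinear test function $|\Delta\tt_m|^{p-2}\Delta\tt_m$, so the $L^p$ energy identity does not survive.

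The paper addresses exactly this obstruction by a different architecture: a Picard iteration with full Laplacian viscosity, $\partial_t\tt_n+u_n\cdot\nabla\tt_n-\kappa\Delta\tt_n=0$ where $u_n=R_D^\perp\tt_{n-1}$. Each step is a \emph{linear} advection--diffusion problem whose solution can be produced (Appendix~2) in $L^\infty_tD(\Lambda^2)\cap L^2_tD(\Lambda^4)$, so $w_k=\Delta\tt_k\in H^1_0\cap H^2$ and the $L^p$ test on $w_k$ is legitimate. Because $u_k$ now comes from $\tt_{k-1}$ rather than $\tt_k$, one cannot immediately bound $\|\nabla\tt_k\|_{L^\infty}$ (only $\tt_k\in H^2$ is known at first), and this forces a three-stage bootstrap $H^2\to W^{2,q}\to W^{2,p}$ with an intermediate $q\in(2,p)$ obtained via $\|\Delta u_k\cdot\nabla\tt_k\|_{L^q}\le\|\Delta u_k\|_{L^p}\|\nabla\tt_k\|_{L^r}$. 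The resulting uniform $W^{2,p}$ bound is independent of both $n$ and $\kappa$, and one passes to the limit $n\to\infty$, then $\kappa\to0$. Your direct nonlinear vanishing-viscosity route would need an independent mechanism to justify the $W^{2,p}$ energy estimate on the approximants; as written, that mechanism is missing.
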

The surface quasigeostrophic equation of geophysical significance (\cite{held}) serves as a two-dimensional model for the three-dimensional Euler equations due to many  mathematical and physical analogies between them (\cite{cmt}). There is a vast literature devoted to local and global well-posedness issues for SQG in $\Rr^2$ and $\Tt^2$. It is known that $L^2$ global weak solutions exist for arbitrary data  (\cite{Res}). The subcritical dissipative case is well-understood (\cite{Res, ConWu, CorCor}) and global solutions with small initial data in the critical space for the critical SQG were obtained in \cite{ccw}. Global regularity for the critical dissipative case is subtle and was first obtained independently in \cite{CaVa, Kis}. There are several later proofs  of this result \cite{Kis2, ConVic}.  The global regularity for the supercritical dissipative and inviscid SQG are outstanding open problems. 

The study of SQG in bounded domains with smooth boundaries was initiated in \cite{ConIgn, ConIgn2} where  $L^2$ global weak solutions were obtained and global Lipschitz a priori interior estimates were obtained for critical SQG. $L^2$ global weak solutions for the inviscid SQG were obtained in \cite{ConNgu}, and generalized in \cite{Ng} for SQG-type equations with more singular constitutive laws, $u=\nabla^\perp \L^{-\beta}\tt$ with $\beta\in (0, 1)$. As in the cases without boundary, uniqueness of weak solutions is not known. The presence of boundaries makes the well-posedness issues become more delicate. The main source of difficulties is the lack of translation invariance of the fractional Laplacian in bounded domains. This manifests itself in particular in the commutator estimates for the fractional Laplacian. In order to appreciate these difficulties, let us consider the local well-posedness in Sobolev spaces for the inviscid SQG.  For the flow to be well-defined it is good for the velocity $u$  to be Lipschitz continuous, and so natural Sobolev spaces for local well-posedness (in two dimensions) are $H^s$ with $s>2$ (because $u$ is obtained from $\tt$ through Riesz transforms). The main tools for proving local well-posedness in the whole space  (\cite{cmt, CorCor}, see also \cite{Ju1}) are the well-known Kato-Ponce commutator estimate (\cite{KP})
\bq\label{KP}
\Vert [\L^s, u]\cdot \nabla \tt]\Vert_{L^2(\Rr^2)}\le C\Vert \nabla u\Vert_{L^\infty(\Rr^2)}\Vert \nabla\tt\Vert_{H^{s-1}(\Rr^2)} +C\Vert  u\Vert_{H^s(\Rr^2)}\Vert \nabla\tt\Vert_{L^\infty(\Rr^2)} \le C\Vert u\Vert_{H^s(\Rr^2)}\Vert \tt\Vert_{H^s(\Rr^2)}
\eq
with $s>2$, where  $\mathscr{F}(\L^sf)(\xi)=|\xi|^s(\mathscr{F}f)(\xi)$, with $\mathscr{F}$ denoting the Fourier transform. Additionally, it is useful that withe Riesz transforms are continuous in Sobolev spaces
\bq\label{cont:R}
\Vert R\tt\Vert_{H^r(\Rr^2)}\le C\Vert \tt\Vert_{H^r(\Rr^2)}\quad\forall r\ge 0.
\eq
The bound \eqref{cont:R} follows directly from the Plancheral theorem. In bounded domains the estimate \eqref{KP} fails because the fractional Laplacian does not commute with differentiation, and the existing sharp estimate  \cite{ConIgn} is too expensive. In order to do regularity calculations the commutator between $\L^s$ and $\nabla$ needs to be considered. This has a singular behavior at the boundary \cite{ConIgn2}, \cite{ConNgu} (which is sharp in half-space):
\[
\la [\L^s, \nabla]f(x)\ra\le \frac{C}{d(x)^{s+1+\frac{d}{p}}}\Vert f\Vert_{L^p(\Omega)}
\]
with $\Omega\subset \Rr^d$, $p\in [1, \infty]$, and $d(x)=\dist(x, \partial\Omega)$. In order to overcome this and to obtain local well-posedness in the inviscid case the idea is to take  even indices $s$, $s=2m$, because then $\L^{2m}$ commutes with $\nabla$ on  the domain $D(\L^{2m})$ of $\L^{2m}$.  This in turn however requires that the nonlinearity $u\cdot\nabla\tt$ to belong to $D(\L^{2m})$, provided $\tt\in D(\L^{2m})$. Unfortunately, this is not true in general. It is true for $m=1$ because $u\cdot\nabla\tt$ vanishes on the boundary. This is due to the following structure:  $u=\nabla^\perp\psi$ is tangent to the boundary because $\psi\vert_{\partial\Omega}=0$, and $\nabla\tt$ is normal to the boundary, because $\tt\vert_{\partial\Omega}=0$. Taking derivatives of $u\cdot \nabla\tt$ unfortunately breaks down this structure. Forced to work with $m=1$, we face another obstacle: $u\in D(\L^2)$ is not Lipschitz continuous. Therefore in Theorem \ref{wp:inviscid} we prove local well-posedness in $H^1_0(\Omega)\cap W^{2, p}(\Omega)$ with $p>2$, hence ensuring that $u$ is Lipschitz. The added difficulty now is that continuity of the Riezs transform from $W^{2, p}(\Omega)$ to $W^{2, p}(\Omega)$ is not available. The proof then consists of three bootstraps: Galerkin approximations to obtain the $H^2$ regularity, a transport estimate to obtain the $W^{2, q}(\Omega)$ regularity for any $q\in (2, p)$, and finally another transport estimate to gain the full $W^{2,p}(\Omega)$ regularity.

The paper is organized as follows. In section \ref{pre} we present the functional setup for the fractional Laplacian in domains using eigenfunction expansions. Theorems \ref{main1}, \ref{main3}, \ref{wp:inviscid} are proved in sections \ref{section:main1}, \ref{section:main3}, \ref{section:inviscid}, respectively. Appendices 1 and 2 are devoted to $L^p$ bounds and local well-psoedness for the linear advection-diffusion equations with fractional dissipation.
\section{Preliminaries}\label{pre}
Let $\Omega$ be an open bounded set of $\Rr^d$, $d\ge 2$, with smooth boundary. The Laplacian $-\Delta$ is defined on $
D(-\Delta)=H^2(\Omega)\cap H^1_0(\Omega)$. Let $\{w_j\}_{j=1}^\infty$ be an orthonormal basis of $L^2(\Omega)$ comprised of $L^2-$normalized eigenfunctions $w_j$ of $-\Delta$, {\it {\it i.e.}}
\[
-\Delta w_j=\lambda_jw_j, \quad \int_{\Omega}w_j^2dx = 1,
\]
with $0<\lambda_1<\lambda_2\le...\le\lambda_j\to \infty$.\\
The fractional Laplacian is defined using eigenfunction expansions,
\[
\Lambda^{\alpha}f\equiv (-\Delta)^{\frac{\alpha}{2}} f:=\sum_{j=1}^\infty\lambda_j^{\frac{\alpha}{2}} f_j w_j\quad\text{with}~f=\sum_{j=1}^\infty f_jw_j,\quad f_j=\int_{\Omega} fw_jdx
\]
for $\alpha\ge 0$ and 
\[
f\in D(\Lambda^{\alpha}):=\{f\in L^2(\Omega): \big(\lambda_j^{\frac{\alpha}{2}} f_j\big)\in \ell^2(\mathbb N)\}.
\]
 The norm of $f$ in $D(\Lambda^{\alpha })$ is defined by
\[
\Vert f\Vert_{\alpha, D}:=\Vert\L^\alpha f\Vert_{L^2(\Omega)}=\big(\sum_{j=1}^\infty\lambda_j^\alpha f_j^2\big)^\mez.
\]
It is also well known that $D(\Lambda)$ and $H^1_0(\Omega)$ are isometric, where $H^1_0(\Omega)$ is equipped with the norm 
\[
\Vert f\Vert_{H^1_0(\Omega)}=\Vert \nabla f\Vert_{L^2(\Omega)}.
\] 
In the language of interpolation theory, 
\bq\label{inter:1}
D(\Lambda^\alpha)=[L^2(\Omega), D(-\Delta)]_{\frac \alpha 2}\quad\forall \alpha\in [0, 2].
\eq
Moreover, it is readily seen by virtue of the H\"older inequality  that
\bq\label{inter}
\Vert f\Vert_{\alpha, D}\le \Vert f\Vert_{\alpha_1, D}^\mu\Vert f\Vert_{\alpha_2, D}^{1-\mu}
\eq
provided $\alpha_1, \alpha_2\ge 0$, $\alpha=\mu \alpha_1+(1-\mu)\alpha_2$, and $\mu\in [0, 1]$.\\
As mentioned above,
\[
H^1_0(\Omega)= D(\Lambda)=[L^2(\Omega), D(-\Delta)]_{\mez},
\]
hence
\[
D(\Lambda^\alpha)=[L^2(\Omega), H^1_0(\Omega)]_{\alpha}\quad\forall \alpha\in [0, 1].
\]
Consequently, we can identify $D(\Lambda^\alpha)$ with usual Sobolev spaces (see Chapter 1 \cite{LioMag}):
\bq\label{identify}
D(\Lambda^\alpha)=
\begin{cases}
H^\alpha_0(\Omega) &\quad\text{if}~ \alpha\in (\mez, 1],\\
H^\mez_{00}(\Omega):=\{ u\in H^\mez_0(\Omega): u/\sqrt{d(x)}\in L^2(\Omega)\}&\quad\text{if}~ \alpha=\mez,\\
H^\alpha(\Omega) &\quad\text{if}~ \alpha\in [0, \mez),\\
\end{cases}
\eq
We have the following relation between $D(\L^s)$ and $H^s(\Omega)$.
\begin{prop}\label{prop:inject}
The continuous embedding 
\bq\label{inject}
D(\L^\alpha)\subset H^\alpha(\Omega)
\eq
holds for all $\alpha\ge 0$.
\end{prop}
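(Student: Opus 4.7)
The plan is to split into two regimes depending on the size of $\alpha$: for $\alpha\in[0,2]$ the embedding follows from the interpolation identity \eqref{inter:1}, and for $\alpha>2$ I would bootstrap by induction using elliptic regularity for the Dirichlet Laplacian on the smooth bounded domain $\Omega$.

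For the base case $\alpha\in[0,2]$, I would exploit \eqref{inter:1}, which identifies $D(\Lambda^\alpha)$ with the complex interpolation space $[L^2(\Omega),D(-\Delta)]_{\alpha/2}$. Since $D(-\Delta)=H^2(\Omega)\cap H^1_0(\Omega)$ is continuously embedded in $H^2(\Omega)$ (indeed, by elliptic regularity its graph norm is equivalent to the $H^2$-norm, as noted in the text), the functorial property of complex interpolation gives
\[
D(\Lambda^\alpha)=[L^2(\Omega),D(-\Delta)]_{\alpha/2}\hookrightarrow [L^2(\Omega),H^2(\Omega)]_{\alpha/2}=H^\alpha(\Omega),
\]
where the final identification $[L^2,H^2]_{\alpha/2}=H^\alpha$ on a smooth bounded domain is the classical result of Lions--Magenes \cite{LioMag}.

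For the inductive step, when $\alpha>2$ write $\alpha=2+\beta$ with $\beta\ge 0$. From the eigenfunction expansion one has the identity $\Lambda^\alpha f=\Lambda^{\beta}(-\Delta f)$, so $f\in D(\Lambda^\alpha)$ implies $-\Delta f\in D(\Lambda^\beta)$ with $\Vert -\Delta f\Vert_{\beta,D}=\Vert f\Vert_{\alpha,D}$. Assuming the embedding already at level $\beta<\alpha$, we get $-\Delta f\in H^\beta(\Omega)$. Since $\alpha\ge 2$ we also have $D(\Lambda^\alpha)\subset D(\Lambda)=H^1_0(\Omega)$, so $f$ carries the Dirichlet boundary condition. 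Standard elliptic regularity on the smooth domain $\Omega$ then promotes $f$ to $H^{\beta+2}(\Omega)=H^\alpha(\Omega)$, with the corresponding norm bound $\Vert f\Vert_{H^\alpha}\le C(\Vert -\Delta f\Vert_{H^\beta}+\Vert f\Vert_{L^2})\le C\Vert f\Vert_{\alpha,D}$. Iterating finitely many times handles all $\alpha\ge 0$.

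The only genuinely delicate point is the interpolation identity $[L^2,H^2]_{\alpha/2}=H^\alpha$ at the borderline values $\alpha=\tfrac12,\tfrac32$, where subtleties between $H^{1/2}_0$ and $H^{1/2}_{00}$ appear in the sharper identification \eqref{identify}. Since Proposition \ref{prop:inject} asks only for the one-sided embedding into the full Sobolev space $H^\alpha(\Omega)$, these endpoint subtleties are harmless: the embedding of $D(\Lambda^\alpha)$ into $H^\alpha(\Omega)$ remains valid, and the possibly strict inclusion matches the statement. The elliptic regularity input used in the inductive step is completely classical for smooth domains and requires no new estimates.
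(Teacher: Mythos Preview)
Your proof is correct and follows essentially the same strategy as the paper: both arguments reduce to elliptic regularity for the Dirichlet Laplacian to step from level $\beta$ to level $\beta+2$, and use interpolation (Lions--Magenes together with \eqref{inter:1}) to handle the non-integer indices. The only cosmetic difference is that the paper establishes the embedding first at integer levels and then interpolates, whereas you cover the entire base interval $[0,2]$ at once via functoriality of interpolation and then iterate.
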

\begin{proof}
By interpolation, it suffices to prove \eqref{inject} for  $\alpha\in \{0, 1, 2,...\}$. The case $\alpha=0$ is obvious while the case $\alpha=1$ follows from \eqref{identify}. Assume by induction \eqref{inject} for $\alpha\le m$ with $m\ge 1$. Let $\tt\in D(\L^{m+1})$ then $f:=-\Delta \tt\in D(\L^{m-1})$ and thus $f\in H^{m-1}(\Omega)$ by the induction hypothesis. On the other hand, $\tt$ vanishes on the boundary $\partial\Omega$ in the trace sense because $\tt\in D(\L^1)=H^1_0(\Omega)$. Elliptic regularity then implies that $\tt\in H^{m+1}(\Omega)$ and 
\[
\Vert \tt\Vert_{H^{m+1}}\le C\Vert f\Vert_{H^{m-1}}\le C\Vert \Delta \tt\Vert_{m-1, D}=C\Vert \tt\Vert_{m+1, D}
\]
which is \eqref{inject} for $\alpha=m+1$.
\end{proof}
Below is the list of some notations used throughout this paper:
\begin{itemize}
\item $(\cdot, \cdot)$: the $L^2(\Omega)$ scalar product.
\item $\langle\cdot, \cdot\rangle_{X', X}$: the dual pairing between $X$ and its dual $X'$.
\item $\gamma_0(u)$: the trace of $u$ on $\partial\Omega$.
\item $\gamma(u)$: the trace of $u\cdot \nu$ on $\partial \Omega$ where $\nu$ is the outward unit normal to $\partial\Omega$.
\end{itemize}
\section{Proof of Theorem \ref{main1}}\label{section:main1}
\subsection{Technical lemmas}
We start with an estimate for the Riesz transforms in Sobolev spaces.
\begin{lemm} If $\tt\in D(\L^r)$ with $r\ge 0$ then
\bq\label{Sobolev:R}
\Vert R_D\tt\Vert_{H^r(\Omega)}\le C\Vert  \tt\Vert_{r, D}.
\eq
\end{lemm}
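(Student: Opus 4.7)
The plan is to pass through the stream function. Set $\psi \defn \Lambda^{-1}\tt$, so that $R_D\tt = \nabla^\perp \psi$, and observe that by the eigenfunction expansion of $\Lambda$ we gain exactly one order of smoothness in the $D(\Lambda^\cdot)$ scale: expanding $\tt = \sum_j \tt_j w_j$ we have $\psi = \sum_j \lambda_j^{-1/2}\tt_j w_j$, hence $\Lambda^{r+1}\psi = \Lambda^r\tt$ and therefore
\[
\Vert\psi\Vert_{r+1,D} = \Vert\tt\Vert_{r,D}.
\]

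Next I would invoke the embedding $D(\Lambda^{r+1})\subset H^{r+1}(\Omega)$ furnished by Proposition \ref{prop:inject}, which yields
\[
\Vert\psi\Vert_{H^{r+1}(\Omega)} \le C\Vert\psi\Vert_{r+1,D} = C\Vert\tt\Vert_{r,D}.
\]
Finally, since $R_D\tt = \nabla^\perp\psi$ is just a first-order derivative of $\psi$, the obvious inequality $\Vert\nabla\psi\Vert_{H^r(\Omega)} \le \Vert\psi\Vert_{H^{r+1}(\Omega)}$ (immediate for integer $r$ from the definition of the Sobolev norm, and for fractional $r$ by interpolation between the integer cases) gives \eqref{Sobolev:R}.

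The whole argument is really a one-line consequence of Proposition \ref{prop:inject}, so there is no genuine obstacle. The only point worth double-checking is that the bound $\Vert\nabla\psi\Vert_{H^r}\le \Vert\psi\Vert_{H^{r+1}}$ is valid for non-integer $r$; I would either cite this as a standard property of the Sobolev scale on smooth bounded domains, or, to stay self-contained, establish it by complex interpolation between the cases $r=\lfloor r\rfloor$ and $r=\lfloor r\rfloor+1$ where it is trivial. The notable feature of the estimate is precisely that it avoids asking for continuity of Riesz transforms from $H^r(\Omega)$ to itself (which is false in general because the Dirichlet stream function need not preserve the $H^r$ regularity across the boundary), replacing the source space by the intrinsically defined $D(\Lambda^r)$.
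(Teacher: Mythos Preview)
Your argument is correct and is essentially identical to the paper's proof: set $\psi=\Lambda^{-1}\tt$, use Proposition~\ref{prop:inject} to pass from $\Vert\psi\Vert_{r+1,D}$ to $\Vert\psi\Vert_{H^{r+1}}$, and then bound $\Vert\nabla\psi\Vert_{H^r}$ by $\Vert\psi\Vert_{H^{r+1}}$. The paper records exactly this chain in a single displayed line, without the additional commentary on interpolation.
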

\begin{proof}
Indeed, we have $R_D \tt=\nabla \psi$ with $\psi=\L^{-1}\tt\in D(\L^{r+1})$. It follows from \eqref{inject} that
\[
\Vert R_D\tt\Vert_{H^r(\Omega)}\le \Vert  \psi\Vert_{H^{r+1}(\Omega)}\le C\Vert  \psi\Vert_{r+1, D}=C\Vert \tt\Vert_{r, D}.
\]
 \end{proof}
The next lemma provides the key estimate needed for the proof of Theorem \ref{main1}.
\begin{lemm}
Let $\alpha\in (0, 1)$ and $\tt\in D(\L^{2+\alpha})$. Denote $u=R^\perp \tt$ and $p=\frac{2}{1-\alpha}$. There exists a positive constant $C=C(\alpha, p)$ such that
\bq\label{commutator}
\Vert [\Delta, u\cdot\nabla]\tt\Vert_{L^2(\Omega)}\le CBA^{\frac{2-\alpha}{2}}\Vert \tt\Vert_{L^2(\Omega)}^\frac{\alpha}{2}
\eq
where
\bq\label{AB}
A=\Vert \L^2 \tt\Vert_{L^2(\Omega)}=\Vert\tt\Vert_{2, D},\quad B=\Vert \Lambda^{2+\alpha} \tt\Vert_{L^2(\Omega)}=\Vert\tt\Vert_{2+\alpha, D}.
\eq
\end{lemm}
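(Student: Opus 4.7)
The plan is to expand the commutator directly via the product rule,
\[
[\Delta, u\cdot\nabla]\theta = (\Delta u)\cdot\nabla\theta + 2\,\nabla u : \nabla^2\theta,
\]
and to estimate each of the two resulting terms in $L^2(\Omega)$ by a H\"older--Sobolev--interpolation argument.

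Next, I would apply H\"older's inequality with the conjugate pair $p=\tfrac{2}{1-\alpha}$, $p'=\tfrac{2}{\alpha}$, putting the top-order factor of each term (a second derivative of $u$ or of $\theta$) in $L^p$ and the first-order factor in $L^{p'}$. In dimension two the Sobolev embeddings $H^{\alpha}(\Omega)\hookrightarrow L^{p}(\Omega)$ and $H^{1-\alpha}(\Omega)\hookrightarrow L^{p'}(\Omega)$ then reduce everything to $H^{2+\alpha}$ norms of $u$ and $\theta$ for the top-order pieces, and to $H^{2-\alpha}$ norms for the first-order pieces. Proposition \ref{prop:inject} converts the Sobolev norms of $\theta$ to its domain norms, while the Riesz-transform estimate \eqref{Sobolev:R} does the same for $u=R_D^\perp\theta$. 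In this way every top-order factor is bounded by $C\|\theta\|_{2+\alpha,D}=CB$ and every first-order factor by $C\|\theta\|_{2-\alpha,D}$.

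The remaining ingredient is the interpolation inequality \eqref{inter} with endpoints $\alpha_1=0$, $\alpha_2=2$ and $\mu=\alpha/2$:
\[
\|\theta\|_{2-\alpha,D}\,\le\,\|\theta\|_{L^2}^{\alpha/2}\,\|\theta\|_{2,D}^{(2-\alpha)/2}\,=\,\|\theta\|_{L^2}^{\alpha/2}\,A^{(2-\alpha)/2}.
\]
Combining this with the previous step produces, for each of the two terms in the expansion, a bound of the form $C\,B\,A^{(2-\alpha)/2}\|\theta\|_{L^2}^{\alpha/2}$, which is exactly \eqref{commutator}.

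No single step is genuinely hard; the delicate point is the calibration of the H\"older exponents. On one hand, the top-order factor must land in a Lebesgue space that is reachable from $H^{2+\alpha}$ through the two-dimensional Sobolev embedding; on the other, the first-order factor must sit at a Sobolev regularity level that interpolates cleanly between $L^2$ and $D(\L^2)$ so as to produce only the norms $A$ and $\|\theta\|_{L^2}$, with the right powers, on the right-hand side. These two constraints force exactly $p=2/(1-\alpha)$, and the constants coming from the Sobolev embedding are responsible for the dependence $C=C(\alpha,p)$, which deteriorates as $\alpha\downarrow 0$ or $\alpha\uparrow 1$.
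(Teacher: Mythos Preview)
Your proof is correct and follows essentially the same route as the paper: expand the commutator, split each term by H\"older with the conjugate pair $(p,p')=\bigl(\tfrac{2}{1-\alpha},\tfrac{2}{\alpha}\bigr)$, use the two-dimensional embeddings $H^{\alpha}\hookrightarrow L^{p}$ and $H^{1-\alpha}\hookrightarrow L^{p'}$, and finish with the interpolation \eqref{inter}. The only minor variation is in handling the top-order factors $\Delta u$ and $\nabla^2\theta$: the paper invokes the $L^p$ boundedness of the Riesz transforms (Shen) and $L^p$ elliptic regularity, whereas you pass directly through $\|f\|_{L^p}\le C\|f\|_{H^{\alpha}}$ combined with Proposition~\ref{prop:inject} and \eqref{Sobolev:R}; both routes yield the same bound $CB$.
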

\begin{proof}
A direct computation gives
\bq\label{cmtt:formula}
[\Delta, u\cdot\nabla]\tt=\Delta u\cdot \nabla \tt+2\nabla u\cdot \nabla\nabla \tt
\eq
where 
\[
\nabla u\cdot \nabla\nabla \tt:=\partial_1 u^1\partial^2_{11}\tt+\partial_2u^1\partial^2_{21}\tt+\partial_1u^2\partial_{21}\tt+\partial_2u^2\partial_{22}\tt
\]
if
\[ u=(u^1, u^2).
\]
Using the facts that $\Delta$ commutes with the Riesz transforms, because it commutes with both $\nabla$ and $\L^{-1}$, the Riesz transforms are bounded in $L^r$ for all $r\in (1, \infty)$, a fact that holds for $C^1$ domains (see Theorem C in \cite{Shen}), together with \eqref{identify} we deduce
\bq\label{cm1}
\Vert \Delta u\Vert_{L^p}=\Vert R_D^\perp\Delta \tt\Vert_{L^p}\le C\Vert \Delta \tt\Vert_{L^p}\le C\Vert \Delta \tt\Vert_{H^{\alpha}}\le C\Vert \Delta \tt\Vert_{\alpha, D}=CB.
\eq
where the embedding $H^{\alpha}\subset L^p$ was used in the second inequality.\\
 Let $q=\frac{2}{\alpha}$ satisfy $\frac{1}{p}+\frac{1}{q}=\mez$. By the embeddings \eqref{inject}, $H^{1-\alpha}\subset L^q$ and interpolation we have
 \bq\label{cm2}
\Vert \nabla\tt\Vert_{L^q}\le C\Vert \tt\Vert_{H^{2-\alpha}}\le C\Vert \tt\Vert_{H^2}^{\frac{2-\alpha}{2}}\Vert \tt\Vert_{L^2}^{\frac \alpha 2}\le CA^{\frac{2-\alpha}{2}}\Vert \tt\Vert_{L^2}^{\frac \alpha 2}.
\eq
Let us note that $\tt\in D(\L^{2+\alpha})\subset D(\L^1)=H^1_0(\Omega)$, so $\tt$ vanishes on the boundary $\partial\Omega$ in the trace sense. Elliptic estimates in $L^p$ together with the embeddings $H^{\alpha}\subset L^p$ and \eqref{inject} imply
\[
\Vert \nabla\nabla\tt\Vert_{L^p}\le \Vert \tt\Vert_{W^{2, p}}\le C\Vert \Delta\tt\Vert_{L^p}\le C\Vert \Delta\tt\Vert_{H^\alpha}\le C\Vert \tt\Vert_{2+\alpha, D}.
\]
Thus,
\bq\label{cm3}
\Vert \nabla\nabla\tt\Vert_{L^p}\le CB.
\eq
Now regarding the term $\nabla u$ we first use the embedding $H^{1-\alpha}\subset L^q$ and the estimate \eqref{Sobolev:R} to have
\[
\Vert \nabla u\Vert_{L^q}\le  \Vert u\Vert_{H^{2-\alpha}}=\Vert R^\perp_D\tt\Vert_{H^{2-\alpha}}\le C\Vert\tt\Vert_{2-\alpha, D},
\]
and then by the interpolation inequality \eqref{inter}
\bq\label{cm4}
\Vert \nabla u\Vert_{L^q}\le CA^{\frac{2-\alpha}{2}}\Vert \tt\Vert_{L^2}^{\frac{\alpha}{2}}.
\eq
Finally, putting together \eqref{cm1}-\eqref{cm4} we arrive at \eqref{commutator} by using the H\"older inequality with exponents $p$ and $q$.
\end{proof}
We recall the following product rule (see Chapter 2, \cite{BCD}) in $\Rr^d$, $d\ge 1$,
\bq\label{product}
\Vert f_1f_2\Vert_{H^{s_1}(\Rr^d)}\le C\Vert f_1\Vert_{H^{s_1}(\Rr^d)}\Vert f_2\Vert_{H^{s_2}(\Rr^d)}
\eq
provided 
\[
s_1\le s_2,\quad s_1+s_2>0,\quad s_2>\frac d2.
\]
By extension, interpolation, and duality, \eqref{product} still holds in smooth bounded domains of $\Rr^d$.
\begin{lemm}\label{lemm:trace}
Let $\tt \in D(\L^2)$, $\psi\in H^1_0(\Omega)\cap H^r(\Omega)$, $r>2$, and $u=\nabla^\perp \psi$. Then $u\cdot\nabla \tt\in H^1_0(\Omega)$.
\end{lemm}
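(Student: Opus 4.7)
The plan is to verify the two defining properties of $H^1_0(\Omega)$ for $u\cdot\nabla\tt$: interior $H^1$ regularity, and the vanishing of the boundary trace.

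For the regularity part, I would invoke the product rule \eqref{product}. By Proposition \ref{prop:inject}, $\tt\in H^2(\Omega)$, hence $\nabla\tt\in H^1(\Omega)$; and since $\psi\in H^r(\Omega)$, we have $u=\nabla^\perp\psi\in H^{r-1}(\Omega)$ with $r-1>1=d/2$ in the present two-dimensional setting. Applying \eqref{product} componentwise with $s_1=1$ and $s_2=r-1$ gives
\[
\Vert u\cdot\nabla\tt\Vert_{H^1(\Omega)}\le C\Vert u\Vert_{H^{r-1}(\Omega)}\Vert\nabla\tt\Vert_{H^1(\Omega)},
\]
so $u\cdot\nabla\tt\in H^1(\Omega)$ and in particular its trace on $\partial\Omega$ is well-defined in $H^{1/2}(\partial\Omega)$.

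For the vanishing of this trace, the crux is the geometric structure emphasized in the Introduction: $u$ is tangent to $\partial\Omega$ whereas $\nabla\tt$ is normal to $\partial\Omega$, so their dot product vanishes on the boundary. Because $r>2$, Sobolev embedding yields $\psi\in C^1(\bar\Omega)$; the condition $\psi\vert_{\partial\Omega}=0$ then forces $\partial_\tau\psi\equiv 0$ on $\partial\Omega$, so $\nabla\psi$ is purely normal there and $u=\nabla^\perp\psi$ is purely tangential, i.e.\ $u\cdot\nu\equiv 0$ classically. For $\tt\in H^1_0\cap H^2$, the trace $\tt\vert_{\partial\Omega}=0$ lies in $H^{3/2}(\partial\Omega)$, and differentiating along $\partial\Omega$ gives $\partial_\tau\tt=0$ in $H^{1/2}(\partial\Omega)$, so $(\nabla\tt)\vert_{\partial\Omega}$ is purely normal in the trace sense. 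The pointwise tangential–normal decomposition then yields
\[
(u\cdot\nabla\tt)\vert_{\partial\Omega}=u_\nu\,\partial_\nu\tt+u_\tau\,\partial_\tau\tt=0.
\]

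The only nontrivial step is to rigorously justify this decomposition at the Sobolev-trace level. The cleanest route is by density: approximate $\tt$ by a sequence $\tt_n\in C^\infty(\bar\Omega)\cap H^1_0(\Omega)$ with $\tt_n\to\tt$ in $H^2(\Omega)$ (available via truncated eigenfunction expansions in $D(\L^2)$), apply the pointwise tangential–normal argument to each smooth $\tt_n$ to conclude $(u\cdot\nabla\tt_n)\vert_{\partial\Omega}\equiv 0$, and pass to the limit using the $H^1$ bound from Step 1 together with continuity of the trace map $\gamma_0:H^1(\Omega)\to H^{1/2}(\partial\Omega)$. This gives $\gamma_0(u\cdot\nabla\tt)=0$ and completes the proof.
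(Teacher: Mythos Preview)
Your proposal is correct and follows essentially the same approach as the paper: both use the product rule \eqref{product} with $s_1=1$, $s_2=r-1>1$ to get $u\cdot\nabla\tt\in H^1(\Omega)$, and both use the tangential/normal structure ($u$ tangent because $\psi\in H^1_0$, $\nabla\tt$ normal because $\tt\in H^1_0$) to conclude the trace vanishes. The only cosmetic difference is that the paper justifies the identity $\gamma_0(u\cdot\nabla\tt)=\gamma_0(u)\cdot\gamma_0(\nabla\tt)$ by invoking continuity of the bilinear map $H^{r-1}\times H^1\to H^1$ directly (together with the product rule \eqref{product} on $\partial\Omega$ to make sense of $\gamma_0(u)\cdot\gamma_0(\nabla\tt)\in H^{1/2}(\partial\Omega)$), whereas you spell out the underlying density argument explicitly; these are the same justification.
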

\begin{proof}
First, let us note that $\gamma_0(u)\in H^{r-\tdm}(\partial\Omega)$ and $\gamma_0(\nabla \tt)\in H^\mez(\partial\Omega)$. In particular, $\gamma_0(u)\cdot \gamma_0(\nabla\tt)$ is well defined in $H^\mez(\partial\Omega)$ by virtue of the product rule \eqref{product} for $\Omega$. Since $\psi\in H^1_0(\Omega)$, $\gamma_0(u)=\gamma_0(\nabla^\perp \psi)$ is tangent to the boundary, and since $\tt\in H^1_0(\Omega)$, $\gamma_0(\nabla\tt)$ is normal to the boundary. Therefore, $\gamma_0(u)\cdot \gamma_0(\nabla\tt)$ vanishes on the boundary. Because the mapping $H^{r-1}(\Omega)\times H^1(\Omega)\to H^1(\Omega)$ is continuous in view of \eqref{product}, $\gamma_0(u\cdot\nabla \tt)=\gamma_0(u)\cdot \gamma_0(\nabla\tt)=0$. For the same reason, we have $u\cdot\nabla\tt\in H^1(\Omega)$ and hence $u\cdot\nabla\tt\in H^1_0(\Omega)$.
\end{proof} 
\subsection{Uniqueness}\label{section:unique}
Let 
\[
\tt_j\in L^\infty\big([0, T]; D(\L^2)\big)\cap L^2\big([0, T]; D(\L^{2+\alpha})\big) ,\quad \alpha\in (0, 2),
\]
$j=1,2$, be two solutions of the inviscid SQG equation with the same initial data $\tt_0$. Then the difference $\tt=\tt_1-\tt_2$ solves
\bq\label{unique:eq}
\partial_t\tt+u\cdot\nabla\tt_1+u_2\cdot\nabla\tt+\k\L^{2\alpha}\tt=0,\quad \tt\vert_{t=0}=0.
\eq
Here, $u=R_D^\perp \tt$. Multiplying this equation by $\tt$, then integrating over $\Omega$ gives
\[
\mez\frac{d}{dt}\Vert \tt\Vert_{L^2(\Omega)}^2=-\int_\Omega \tt u_1\cdot\nabla \tt-\int_\Omega \tt u\nabla \tt_2-\k\int_\Omega \tt\L^{2\alpha}\tt.
\]
After integrating by parts, the last term is nonpositive, the first term vanishes because $u_1$ is divergence free. The middle term is bounded by
\[
\Vert \tt\Vert_{L^2(\Omega)}\Vert R_D^\perp \tt\Vert_{L^2(\Omega)}\Vert \nabla \tt_2\Vert_{L^\infty(\Omega)}\le C\Vert \tt\Vert_{L^2(\Omega)}^2\Vert \tt_2\Vert_{2+\alpha, D},
\]
where we used the embeddings $D(\L^{2+\alpha})\subset H^{2+\alpha}(\Omega)\subset W^{1, \infty}(\Omega)$. Because $\tt_2\in L^2\big([0, T]; D(\L^{2+\alpha}))$, the Gr\"onwall lemma concludes that $\tt=0$ on $[0, T]$, and thus $\tt_1=\tt_2$.
\subsection{Local existence}\label{section:local}
Let $\alpha\in (0, 2)$ and let $\tt_0\in D(\L^2)=H^2(\Omega)\cap H^1_0(\Omega)$ be an initial datum. We prove local existence of solutions using the Galerkin approximations. Denote by $\P_m$ the projection in $L^2$ onto the linear span $L^2_m$ of eigenfunctions $\{w_1,...,w_m\}$, {\it {\it i.e.}}
\[
\P_m f=\sum_{j=1}^mf_jw_j\quad\text{for}~f=\sum_{j=1}^\infty f_jw_j.
\]
It is readily seen that $\P_m$ commutes with $\L^s$ on $D(\L^s)$ for any $s\ge 0$.

The $m$th Galerkin approximation of \eqref{SQG} is the following ODE system in the finite dimensional space $\P_mL^2(\Omega)$:
\bq\label{Galerkin}
\begin{cases}
\dot \tt_m+\P_m(u_m\cdot\nabla\tt_m)+\k \Lambda^{2\alpha}\tt_m=0&\quad t>0,\\
\tt_m=P_m\tt_0&\quad t=0
\end{cases}
\eq
with $\tt_m(x, t)=\sum_{j=1}^m\tt_ j^{(m)} (t)w_j(x)$ and $u_m={R_D}^\perp\tt_m$ automatically satisfying $\cnx u_m=0$. Note that in general $u_m\notin L^2_m$.
The existence of solutions of \eqref{Galerkin} at fixed $m$ follows from the fact that this is an ODE:
\[
\frac{d\theta^{(m)}_l}{dt} + \sum_{j,k=1}^m\gamma^{(m)}_{jkl}\theta^{(m)}_j\theta^{(m)}_{k} + \k\lambda_l^\alpha\tt^{(m)}_l= 0
\label{galmode}
\]
with
\[
\gamma^{(m)}_{jkl} = \lambda_j^{-\frac{1}{2}}\int_{\Omega}\left(\na^{\perp}w_j\cdot\na w_k\right)w_ldx.
\]
Since $\P_m$ is self-adjoint in $L^2$, $u_m$ is divergence-free and $w_j$ vanishes at the boundary $\partial\Omega$, integrations by parts give
\[
\int_\Omega \tt_m\P_m(u_m\cdot\nabla \tt_m)dx=\int_\Omega \tt_m u_m\cdot\nabla \tt_mdx=0
\]
and
\[
\int_{\Omega}\L^{2\alpha}\tt_m\tt_mdx=\Vert\L^\alpha \tt_m\Vert_{L^2}^2.
\]
 It follows that 
 \bq\label{local:L2}
 \mez\frac{d}{dt}\Vert \tt_m\Vert^2_{L^2}+\k \Vert\L^\alpha \tt_m\Vert_{L^2}^2=0
 \eq
 and in particular, the $L^2$ norm of $\tt_m$ is bounded:
\[
\Vert \tt_m(\cdot, t)\Vert^2_{L^2(\Omega)}=\Vert \P_m\tt_0(\cdot, 0)\Vert^2_{L^2(\Omega)}\le \Vert \tt_0\Vert^2_{L^2(\Omega)}.
\]
This can be seen directly on the ODE because $\gamma^{(m)}_{jkl}$ is antisymmetric in $k,l$.
Therefore, the smooth solution $\tt_m$ of \eqref{Galerkin} exists globally. Observe that for the sake of global existence of \eqref{Galerkin}, the dissipative effect is not needed, {\it i.e.} $\k$ can be $0$. Obviously, $\tt_m(\cdot, t)\in D(\L^r)$ for all $r\ge 0$ and $t\ge 0$. According to Lemma \ref{lemm:trace}, $u_m\cdot\tt_m\in H^1_0(\Omega)$ which combined  with the fact that $\Delta (u\cdot \tt_m)\in L^2(\Omega)$ implies $u_m\cdot\tt_m\in D(-\Delta)$. Now applying $\L^2=-\Delta$ to \eqref{Galerkin} and noticing that $\L^2$ commutes with $\P_m$ on $D(\L^2)$ result in
\[
\partial_t(\L^2\tt_m)+\P_m\big( [\L^2, u_m\cdot \nabla]\tt_m\big)+\P_m \big(u_m\cdot\nabla(\L^2\tt_m)\big)+\k\Lambda^{2+2\alpha}\tt_m=0
\]
 Next, we take the scalar product with $\L^2 \tt_m$, use the commutator estimate \eqref{commutator},  and the fact that $\P_m$ is self-adjoint in $L^2$ to arrive at the differential inequality
\bq\label{diff:ineq}
\mez\frac{d}{dt} A_m^2+\k B_m^2\le CB_mA_m^{\frac{4-\alpha}{2}}\Vert \tt_m\Vert_{L^2}^\frac{\alpha}{2}\le CB_mA_m^2
\eq
where $A_m$ and $B_m$ are defined as in \eqref{AB} for $\tt_m$.  Then an application of the Young inequality allows us to hide $B_m$  on the right-hand side of \eqref{diff:ineq} and obtain
\bq\label{diff:ineq'}
\mez\frac{d}{dt} A_m^2+\frac{\k}{2} B_m^2\le \frac{C}{\k}A_m^4.
\eq
Ignoring $B_m$ and integrating \eqref{diff:ineq'} leads to 
\[
A_m^2(t)\le 2A_m^2(0)\quad\forall t\in [0, T_m] 
\]
with
\[
T_m:=\frac{\k}{2CA_m(0)^2}\ge T:=\frac{\k}{2CA(0)^2},\quad A(0)=\Vert\tt_0\Vert_{2,D}.
\]
In other words, $\tt_m$ is uniformly in $m$ bounded in $L^\infty([0, T]; D(\L^2))$. Using the equation we find that $\partial_t \tt_m$ is uniformly in $m$ bounded in $L^\infty([0, T]; L^2(\Omega))$.  The Aubin-Lions lemma (\cite{Lions}) then allows us to conclude the existence of a solution $\tt$ of \eqref{SQG} on $[0, T]$. Moreover, by integrating \eqref{diff:ineq'} we find that $\tt$ satisfies
\bq\label{reg:tt}
\tt\in L^\infty\big([0, T]; D(\L^2)\big)\cap L^2\big([0, T]; D(\L^{2+\alpha})\big).
\eq
\subsection{Global existence}
Let $\alpha\in (0, 2)$ and let $\tt_0\in D(\L^2)$ be an initial datum.  We reuse the notations of section \ref{section:local}. Recall from \eqref{diff:ineq} that
\bq\label{diff:ineq0}
\mez\frac{d}{dt} A_m^2+\k B_m^2 \le CB_mA_m^{\frac{4-\alpha}{2}}\Vert \tt_m\Vert_{L^2}^\frac{\alpha}{2}.
\eq
 It is readily seen by the interpolation inequality \eqref{inter} that
\[
A_m=\Vert \tt_m\Vert_{2, D}\le C\Vert \Lambda^{2+\alpha}\tt_m\Vert_{L^2}^{\frac{2}{2+\alpha}}\Vert \tt_m\Vert_{L^2}^{\frac{\alpha}{2+\alpha}}= CB_m^{\frac{2}{2+\alpha}}\Vert \tt_m\Vert_{L^2}^{\frac{\alpha}{2+\alpha}}.
\]
Consequently
\begin{align*}
B_mA_m^{\frac{4-\alpha}{2}}\Vert \tt_m\Vert_{L^2}^\frac{\alpha}{2}&=B_mA_m^{1-\alpha}\Vert \tt_m\Vert_{L^2}^\frac{\alpha}{2}A_m^{\frac{2+\alpha}{2}}\\
&\le CB_mA_m^{1-\alpha}\Vert \tt_m\Vert_{L^2}^\frac{\alpha}{2}B_m\Vert \tt\Vert_{L^2}^\frac{\alpha}{2}\\
&\le CB_m^2A_m
\end{align*}
and thus
\bq\label{diff:ineq3}
\frac{d}{dt} A_m^2+\k B_m^2\le CB_m^2\big(A_m-\frac{\k}{C}\big),\quad C=C(\alpha).
\eq
Integrating this leads to
\[
A_m^2(t)+\int_0^t \k B_m^2 ds\le A_m^2(0)+C\int_0^t B_m^2\big(A_m-\frac{\k}{C}\big)ds\quad\forall t\ge 0.
\]
 By a coninuity argument,  if 
\bq
A(0)=\Vert\tt_0\Vert_{2, D}< \frac{\k}{C}
\eq
then $A_m(t)\le \frac{\k}{C}$ for $t\ge 0$ and thus, in view of \eqref{diff:ineq3}, $A_m(t)\le A_0$ for $t\ge 0$. In other words, the $D(\L^2)$  norm of $\tt_m$ is uniformly in $m$ bounded over all finite time interval $[0, T]$. Using the equation, we deduce a uniform bound for $\partial_t \tt_m$ in $L^\infty([0, T]; L^2(\Omega))$. Passing to the limit $m\to \infty$ then can be done by virtue of the Aubin-Lions lemma (\cite{Lions}) on each finite time interval $[0, T]$. By uniqueness, we obtain a unique global solution.
\section{Proof of Theorem \ref{main3}}\label{section:main3}
We first prove the following key estimate for the nonlinearity.
\begin{lemm}\label{lemm:nonlin}
Let $\alpha\in (\mez, 1]$, $\frac 1q\in (0, \alpha-\mez)$, $s\in [\alpha, \alpha+1]$. Fix $\delta\in (0, \mez(\alpha-\mez-\frac 1q))$ and put
\[
N=
\begin{cases}
\frac{\alpha}{\alpha-\mez-\frac 1q}\quad&\text{if}~s\ne \mez+\alpha,\\
\frac{\alpha}{\alpha-\delta-\mez-\frac 1q}\quad&\text{if}~s=\mez+\alpha.
\end{cases}
\]
Then with $\tt\in D(\L^2)$ and $u=R_D^\perp \tt$ we have for all $\eps>0$
\bq\label{sub:nonlin}
\la \int _{\Omega}\L^{s+\alpha}\tt \L^{s-\alpha}(u\cdot \nabla \tt)dx\ra\le 3\eps\Vert \tt\Vert^2_{s+\alpha, D}+\eps\Vert u\Vert^2_{H^{s+\alpha}}+C_\eps\Vert u\Vert_{L^q}^N\Vert \tt\Vert_{H^s}^2+C_\eps\Vert \tt\Vert_{L^q}^N\Vert u\Vert_{H^s}^2.
\eq
\end{lemm}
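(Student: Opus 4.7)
My plan is to reduce the left-hand side, via Cauchy-Schwarz and the identification \eqref{identify}, to a Kato-Ponce type product estimate in $H^{s-\alpha}$, then to use Sobolev interpolation and close with Young's inequality.

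First, applying the Cauchy-Schwarz inequality gives
\[
\Bigl|\int_\Omega \L^{s+\alpha}\tt\,\L^{s-\alpha}(u\cdot\nabla\tt)\,dx\Bigr|\le \|\tt\|_{s+\alpha,D}\,\|u\cdot\nabla\tt\|_{s-\alpha,D}.
\]
Assume momentarily that $s-\alpha\ne 1/2$. The identification \eqref{identify}, combined with Lemma \ref{lemm:trace} (which ensures $u\cdot\nabla\tt\in H^1_0$), yields $\|u\cdot\nabla\tt\|_{s-\alpha,D}\le C\|u\cdot\nabla\tt\|_{H^{s-\alpha}}$. Using $\cn u=0$ to write $u\cdot\nabla\tt=\cn(u\tt)$, a Kato-Ponce product rule with an $L^q$--$L^{q^*}$ split, $q^*=2q/(q-2)$, then yields
\[
\|u\cdot\nabla\tt\|_{H^{s-\alpha}}\le C\|u\tt\|_{H^{s-\alpha+1}}\le C\|u\|_{L^q}\|\tt\|_{W^{s-\alpha+1,q^*}}+C\|\tt\|_{L^q}\|u\|_{W^{s-\alpha+1,q^*}}.
\]

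Next, the two-dimensional Sobolev embedding $H^{\sigma+2/q}(\Omega)\hookrightarrow W^{\sigma,q^*}(\Omega)$ together with interpolation between $H^s$ and $H^{s+\alpha}$ gives
\[
\|\tt\|_{W^{s-\alpha+1,q^*}}\le C\|\tt\|_{H^{s-\alpha+1+2/q}}\le C\|\tt\|_{H^s}^{1-\theta}\|\tt\|_{H^{s+\alpha}}^\theta,\qquad \theta=\frac{1+2/q-\alpha}{\alpha},
\]
and analogously for $u$. The hypothesis $1/q<\alpha-1/2$ is precisely the assertion $\theta<1$. Collecting these estimates and invoking \eqref{Sobolev:R} and Proposition \ref{prop:inject} to replace $\|u\|_{H^{s+\alpha}}$ and $\|\tt\|_{H^{s+\alpha}}$ by $\|\tt\|_{s+\alpha,D}$ where convenient, one arrives at a bound of the form
\[
\|\tt\|_{s+\alpha,D}\Bigl(\|u\|_{L^q}\|\tt\|_{H^s}^{1-\theta}\|\tt\|_{s+\alpha,D}^\theta+\|\tt\|_{L^q}\|u\|_{H^s}^{1-\theta}\|u\|_{H^{s+\alpha}}^\theta\Bigr).
\]
Young's inequality with conjugate exponents $2/(1+\theta)$ and $2/(1-\theta)$, applied separately to each summand together with weighted AM-GM for the mixed high-order factor $\|\tt\|_{s+\alpha,D}\|u\|_{H^{s+\alpha}}^\theta$, absorbs the top-order terms into $\eps\|\tt\|_{s+\alpha,D}^2+\eps\|u\|_{H^{s+\alpha}}^2$ and leaves $C_\eps\|u\|_{L^q}^N\|\tt\|_{H^s}^2+C_\eps\|\tt\|_{L^q}^N\|u\|_{H^s}^2$, where $N=2/(1-\theta)=\alpha/(\alpha-1/2-1/q)$ as claimed.

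The main obstacle is the critical case $s-\alpha=1/2$, where $D(\L^{1/2})=H^{1/2}_{00}$ is strictly smaller than $H^{1/2}_0$ and the bound $\|\cdot\|_{1/2,D}\le C\|\cdot\|_{H^{1/2}}$ fails. I would bypass this by shifting via self-adjointness, $(\L^{s+\alpha}\tt,\L^{s-\alpha}(u\cdot\nabla\tt))=(\L^{s+\alpha-2\delta}\tt,\L^{s-\alpha+2\delta}(u\cdot\nabla\tt))$, and then running the previous argument at the index $s-\alpha+2\delta\in(1/2,1]$, where the critical obstruction disappears. The cost is a higher Sobolev index $s-\alpha+1+2\delta+2/q$ in the interpolation step, producing the modified exponent $N=\alpha/(\alpha-\delta-1/2-1/q)$ stated in the borderline case.
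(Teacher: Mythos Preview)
Your argument is correct and follows essentially the same route as the paper: Cauchy--Schwarz, the identification \eqref{identify} together with Lemma \ref{lemm:trace}, writing $u\cdot\nabla\tt=\cnx(u\tt)$, a Kato--Ponce product estimate with the $L^q$--$L^{q^*}$ split (the paper's $p$ is your $q^*$), the Sobolev embedding $H^{s+\beta}\hookrightarrow W^{s+1-\alpha,q^*}$, interpolation between $H^s$ and $H^{s+\alpha}$, and Young's inequality with the conjugate pair $\bigl(\tfrac{2}{1+\theta},\tfrac{2}{1-\theta}\bigr)$; your $\theta$ is exactly the paper's $\beta/\alpha$.

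The only genuine point of divergence is the borderline case $s-\alpha=\tfrac12$. The paper handles it statically, invoking the inclusion $H^{1/2+\delta}_0(\Omega)\subset H^{1/2}_{00}(\Omega)=D(\L^{1/2})$ to bound $\|u\cdot\nabla\tt\|_{1/2,D}$ by $\|u\cdot\nabla\tt\|_{H^{1/2+\delta}}$ and then runs the product estimate with $\alpha$ replaced by $\alpha-\delta$. You instead redistribute derivatives via self-adjointness, $(\L^{s+\alpha}\tt,\L^{s-\alpha}f)=(\L^{s+\alpha-2\delta}\tt,\L^{s-\alpha+2\delta}f)$, and work at the noncritical index $s-\alpha+2\delta$. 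Both devices are legitimate and inexpensive; your shift requires only $u\cdot\nabla\tt\in D(\L^{s-\alpha+2\delta})$, which is covered by Lemma \ref{lemm:trace} since $s-\alpha+2\delta\le 1$, and the lowered factor $\|\tt\|_{s+\alpha-2\delta,D}$ is harmlessly dominated by $\|\tt\|_{s+\alpha,D}$. A small bonus of your variant is that the resulting exponent $N=\alpha/(\alpha-\delta-\tfrac12-\tfrac1q)$ matches the statement exactly, whereas the paper's computation with a single $\delta$-shift literally yields $2\alpha/(2\alpha-1-2/q-\delta)$; the discrepancy is immaterial for the application but your arithmetic is the cleaner one.
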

\begin{proof}
According to Lemma \ref{lemm:trace}, $u\cdot\nabla \tt\in D(\L)$. Let $p$ satisfy $\frac 1p+\frac 1q=\mez$ and put
\[
\beta=
\begin{cases}
1+\frac 2q-\alpha\quad&\text{if}~s\ne \mez+\alpha,\\
1+\frac 2q-\alpha+\delta\quad&\text{if}~s=\mez+\alpha.
\end{cases}
\]
Note that $\beta\in (0, \alpha)$ and $N=\frac{2\alpha}{\alpha-\beta}$ is the conjugate exponent of $\frac{2\alpha}{\alpha+\beta}$, {\it i.e.} $\frac{1}{N}+\frac{\alpha+\beta}{2\alpha}=1$.  By  \eqref{identify}, $D(\L^{s-\alpha})=H^{s-\alpha}_0(\Omega)$ if $s-\alpha\ne \mez$ and $H^{s-\alpha+\delta}_0(\Omega)\subset D(\L^{s-\alpha})$ if $s-\alpha=\mez$. Writing $u\cdot \nabla\tt=\cnx (u\tt)$ we estimate using the H\"older inequality
\[
I:=\la \int _{\Omega}\L^{s+\alpha}\tt \L^{s-\alpha}(u\cdot \nabla \tt)dx\ra\le \Vert \tt\Vert_{s+\alpha, D}\Vert \cnx (u\tt)\Vert_{H^{s-\alpha}}\le \Vert \tt\Vert_{s+\alpha, D}\Vert u\tt\Vert_{H^{s+1-\alpha}}
\]
if $s-\alpha\ne \mez$, and similarly,
\[
I\le \Vert \tt\Vert_{s+\alpha, D}\Vert u\tt\Vert_{H^{s+1-\alpha+\delta}}
\]
if $s-\alpha=\mez$.

In $\Rr^d$ we have
\begin{align*}
\Vert \phi_1\phi_2\Vert_{H^{s+1-\alpha}}&\le C\Vert \phi_1\Vert_{L^q}\Vert  \phi_2\Vert_{W^{s+1-\alpha, p}}+C\Vert \phi_2\Vert_{L^q}\Vert  \phi_1\Vert_{W^{s+1-\alpha, p}}\\
&\le C\Vert \phi_1\Vert_{L^q}\Vert  \phi_2\Vert_{H^{s+\beta}}+C\Vert \phi_2\Vert_{L^q}\Vert  \phi_1\Vert_{H^{s+\beta}}
\end{align*}
in view of the embedding $H^{s+\beta}(\Rr^d)\subset W^{s+1-\alpha, p}(\Rr^d)$. Then by extension  and interpolation the following inequality holds in $\Omega$
\[
\Vert \phi_1\phi_2\Vert_{H^{s+1-\alpha}}\le C\Vert \phi_1\Vert_{L^q}\Vert  \phi_2\Vert_{H^{s+\beta}}+C\Vert \phi_2\Vert_{L^q}\Vert  \phi_1\Vert_{H^{s+\beta}}
\]
which implies
\[
 \Vert u\tt\Vert_{H^{s+1-\alpha}}\le C\Vert u\Vert_{L^q}\Vert  \tt\Vert_{H^{s+\beta}}+C\Vert \tt\Vert_{L^q}\Vert  u\Vert_{H^{s+\beta}}.
\]
The same estimate holds with $\alpha$ replaced with $\alpha-\delta$. We thus obtain in both cases
\[
I\le C\Vert \tt\Vert_{s+\alpha, D}\Vert u\Vert_{L^q}\Vert  \tt\Vert_{H^{s+\beta}}+C\Vert \tt\Vert_{s+\alpha, D}\Vert \tt\Vert_{L^q}\Vert  u\Vert_{H^{s+\beta}}.
\]
By interpolation, we have
\[
\Vert  \phi\Vert_{H^{s+\beta}}\le \Vert  \phi\Vert_{H^{s+\alpha}}^{\frac{\beta}{\alpha}}\Vert  \phi\Vert_{H^s}^{\frac{\alpha-\beta}{\alpha}}.
\]
 Applying Young inequalities yields for all $\eps>0$
\begin{align*}
\Vert \tt\Vert_{s+\alpha, D}\Vert u\Vert_{L^q}\Vert  \tt\Vert_{H^{s+\beta}}&\le \eps\Vert \tt\Vert_{s+\alpha, D}^{\frac{2\alpha}{\alpha+\beta}}\Vert\tt\Vert_{H^{s+\alpha}}^{\frac{2\beta}{\alpha+\beta}}+C_\eps\big( \Vert u\Vert_{L^q}\Vert \tt\Vert_{H^\alpha}^\frac{\alpha-\beta}{s}\big)^N\\
&=\eps\Vert \tt\Vert_{s+\alpha, D}^{\frac{2\alpha}{\alpha+\beta}}\Vert\tt\Vert_{H^{s+\alpha}}^{\frac{2\beta}{\alpha+\beta}}+C_\eps \Vert u\Vert_{L^q}^N\Vert \tt\Vert_{H^s}^2\\
&\le \eps\Vert \tt\Vert_{s+\alpha, D}^2+\eps\Vert \tt\Vert_{H^{s+\alpha}}^2+C_\eps \Vert u\Vert_{L^q}^N\Vert \tt\Vert_{H^s}^2
\end{align*}
and similarly,
\[
\Vert \tt\Vert_{s+\alpha, D}\Vert \tt\Vert_{L^q}\Vert  u\Vert_{H^{s+\beta}}\le \eps\Vert \tt\Vert_{s+\alpha, D}^2+\eps\Vert u\Vert_{H^{s+\alpha}}^2+C_\eps \Vert \tt\Vert_{L^q}^N\Vert u\Vert_{H^s}^2.
\]
Using the embedding $D(\L^{s+\alpha})\subset H^{s+\alpha}$ and putting together the above considerations leads to the estimate \eqref{sub:nonlin}.
\end{proof}
\begin{rema}
When $\Omega=\Rr^2, \Tt^2$, the estimate \eqref{sub:nonlin} holds for any $s>0$ (see Chapter 3 \cite{Res}). Here, for domains with boundaries, the restriction $s\le 1+\alpha$ was imposed because  $s-\alpha>1$ requires more vanishing conditions for $u\cdot \nabla\tt$ on $\partial\Omega$  in order to have $u\cdot\nabla\tt\in D(\L^{s-\alpha})$. In addition, product rules for $\L^\beta (ab)$ with $\beta >1$ are not available. In the above proof, the fact that $s-\alpha\le 1$ helped bounding $\Vert \L^\beta (ab)\Vert_{L^2}$ by $\Vert ab\Vert_{H^\beta}$, in view of \eqref{identify}, and then we could use the product rules in usual Sobolev spaces.

The restriction $s\le 1+\alpha$ at first limits the regularity of the solution, {\it i.e.} $\tt\in L^\infty_t D(\L^{1+\alpha})\cap L^2_tD(\L^{1+2\alpha})$. In order to gain the full regularity $ L^\infty_tD(\L^2)\cap L^2_tD(\L^{2+\alpha})$ we note that $u=R_D^\perp \tt\in L^2_tD(\L^{1+2\alpha})\subset L^2_tW^{2, q}$ with $q>2$ because $2\alpha>1$. Then, using the result of Appendix 2, we know that in general the linear transport equation
\[
\partial_t f+u\cdot \nabla f+\k \L^{2\alpha} f=0
\]
has a solution $f\in L^\infty_tD(\L^2)\cap L^2_tD(\L^{2+\alpha})$. Moreover, uniqueness holds in the class of $f\in L^\infty_t(H^1_0\cap L^\infty)$. The known regularity of $\tt$ is thus enough to conclude that $\tt=f$, and thus $\tt$ has the full regularity. The rest of this section is devoted to implement this strategy.
\end{rema}
Let $\tt_0\in D(\L^2)$ be an initial datum and $T>0$ be fixed. We construct a solution for \eqref{SQG} using the retarded mollifications. To this end we pick a $\phi\in C^\infty((0, \infty))$, $\phi\ge 0$, with $\supp\phi\in [1, 2]$, and let 
\[
U_\delta[\tt](t)=\int_0^\infty \phi(\tau)R_D^\perp\tt(t-\delta\tau) d\tau
\]
where we set $\tt(t)=0$ for all $t<0$. In particular, $U_\delta[\tt](t)$ depends on the values of $\tt(t')$ only for $t'\in [t-2\delta, t-\delta]$.

{\bf Step 1.} We pick a sequence $\delta_m\to 0^+$ and consider the approximate equations for $\tt_m$
\bq\label{retard:eq}
\partial_t\tt_m+u_m\cdot \nabla\tt_m+\k\L^{2\alpha}\tt_m=0
\eq
with initial data $\tt_m(0)=\tt_0$ and velocity $u_m:=U_{\delta_m}[\tt_m]$. For a fixed $m$, equation \eqref{retard:eq} is linear on each subinterval $I_k:=[t_k, t_{k+1}]$, $t_k:=k\delta_m$, $k\in \Zz$, because $u_m$ is determined by the values of $\tt_m$ on the two previous subintervals $I_{k-1}$ and $I_{k-2}$. By our setting, $\tt_m\equiv 0$ on $\cup_{k< 0} I_k$. On $I_0$, $u_m=0$ and the linear equation \eqref{retard:eq} with initial data $\tt_m(0)=\tt_0$ has a unique solution 
\[
\tt_m(t)=\sum_{j\ge 1}e^{-\lambda_j^\alpha t}\tt_{0, j}w_j\quad \text{with}~\tt_{0, j}=\int_\Omega \tt_0w_j dx.
\]
Direct estimates show that
\[
\tt_m\in L^\infty\big(I_0; D(\L^2)\big)\cap L^2\big(I_0; D(\L^{2+\alpha})\big).
\]
This implies in view of \eqref{Sobolev:R} that
\[
u_m\in L^2(I_1; H^{2+\alpha})\subset W^{2, p}
\]
with  $p=\frac{2}{1-\alpha}>2$. This regularity of $u_m$ on $I_1$ suffices to conclude by applying Theorem 4 in \cite{ConIgn} that there exists a unique solution $\tt_m$ on $I_1$ and thus, by induction, on $I_k$ for all $k\ge 1$, and
\[
\tt_m\in L^\infty\big(I_k; D(\L^2)\big)\cap L^2\big(I_k; D(\L^{2+\alpha})\big).
\]
 The proof of Theorem 4 in \cite{ConIgn} makes use of a general commutator estimate for $[\L, u\cdot \nabla]\tt$ in $D(\L^\mez)$ derived in the same paper. In Appendix 2, we give a direct proof without the commutator estimate. 
 
We showed so far that for any fixed integer $m$, equation \eqref{retard:eq} with initial data $\tt_0$ has a solution
\bq\label{reg:app}
\tt_m\in L^\infty\big([0, T]; D(\L^2)\big)\cap L^2\big([0, T]; D(\L^{2+\alpha})\big).
\eq
{\bf Step 2.} We appeal to Lemma \ref{lemm:nonlin} to pass to the limit $m\to \infty$ in the larger space $D(\L^{\alpha+1})$. First, it follows from \eqref{Sobolev:R}, \eqref{reg:app}, and the definition of $u_m$ that
\bq\label{sub:bound1}
\int_0^t\Vert u_m(\tau)\Vert_{H^{r}}^2\le C\int_0^t\Vert\tt_m(\tau)\Vert_{r, D}^2 d\tau,\quad t\in [0, T],~r\in [0, 2+\alpha].
\eq	
Secondly, according to Proposition \ref{prop:Lr}, the $L^r$ bounds
\bq\label{sub:bound2}
\sup_{[0, t]}\Vert u_m(\tau)\Vert_{L^r}\le C\sup_{[0, t]}\Vert\tt_m(\tau)\Vert_{L^r}\le C\Vert\tt_0\Vert_{L^r},\quad t\in [0, T],
\eq
hold for all $r\ge 4$.

Let us fix $s=\alpha+1$ and
\[
q>\min\left\{4, \big(\alpha-\mez\big)^{-1}\right\}.
\]
 Applying  $\L^{s-\alpha}$ in \eqref{retard:eq}, then taking the scalar product with $\L^{s+\alpha}\tt_m$ we obtain
 \[
\mez\frac{d}{dt}\Vert\tt_m\Vert_{s, D}^2+\k\Vert\tt_m\Vert_{s+\alpha, D}^2= \la\int_\Omega \L^{s+\alpha}\tt_m\L^{s-\alpha}(u_m\nabla \tt_m) dx\ra.
\]
Using \eqref{sub:nonlin} (note that $\tt_m\in D(\L^2)$) to estimate the right-hand side and then integrating the differential inequality we obtain for $t\le T$
\begin{align*}
&\Vert\tt_m(t)\Vert_{s, D}^2+2\k\int_0^t\Vert\tt_m(\tau)\Vert^2_{s+\alpha, D}d\tau\\
&\le \Vert\tt_0\Vert_{\alpha, D}^2+6\eps\int_0^t\Vert \tt_m(\tau)\Vert^2_{s+\alpha, D}d\tau+2\eps\int_0^t\Vert u_m(\tau)\Vert^2_{H^{s+\alpha}}d\tau\\
&\quad+C_\eps\int_0^t\Vert u_m(\tau)\Vert_{L^q}^N\Vert \tt_m(\tau)\Vert_{H^s}^2d\tau+C_\eps\int_0^t\Vert \tt_m(\tau)\Vert_{L^q}^N\Vert u_m(\tau)\Vert_{H^s}^2d\tau.
\end{align*}
We choose $\eps=\frac{\k}{M}$, $M$ being sufficiently large,  use \eqref{sub:bound1}, \eqref{sub:bound2}, \eqref{inject} and the Gr\"onwall lemma to arrive at
\bq\label{unibound:sub}
\Vert\tt_m\Vert_{L^\infty([0, T]; D(\L^s))}+\Vert\tt_m\Vert_{L^2([0, T]; D(\L^{s+\alpha}))} \le C\Vert\tt_0\Vert_{\alpha, D}\exp\big(CT\Vert\tt_0\Vert_{L^q}^N\big)
\eq
with $C=C(\k)$. The use of equation \eqref{retard:eq} and the bound \eqref{sub:bound1} implies that $\partial_t\tt_m$ is uniformly in $m$ bounded in $L^2([0, T]; L^2(\Omega))$. The Aubin-Lions lemma (\cite{Lions}) then allows us to conclude the existence of a solution 
\[
\tt\in L^\infty\big([0, T]; D(\L^s)\big)\cap L^2\big([0, T]; D(\L^{s+\alpha})\big)
\]
of \eqref{SQG}. Moreover, $\tt$ obeys the bound \eqref{unibound:sub}.

We note that $u=R^\perp_D\tt\in L^2([0, T]; H^{s+\alpha}(\Omega))$ with $s+\alpha=1+2\alpha>2$ and hence $u\in L^2([0, T]; W^{2, p}(\Omega))$ with $p=\frac{2}{1-\alpha}>2$. According to Theorem \ref{wp:ade} 1., there exists a solution 
\bq\label{reg:tt1}
\tt_1\in L^\infty\big([0, T]; D(\L^2)\big)\cap L^2\big([0, T]; D(\L^{2+\alpha})\big)
\eq
of the linear equation
\[
\partial_t\tt_1+u\cdot\nabla \tt_1+\L^{2\alpha}\tt_1=0,\quad\tt_1\arrowvert_{t=0}=\tt_0\in D(\L^2).
\]
The regularity of $\tt$ is sufficient to conclude using Theorem \ref{wp:ade} 2. that $\tt=\tt_1$ and thus $\tt$ has the full regularity as in \eqref{reg:tt1}. Uniqueness follows from section \ref{section:unique}.
 \section{Proof of Theorem \ref{wp:inviscid}}\label{section:inviscid}
Let $\tt_0\in H^1_0(\Omega)\cap W^{2, p}(\Omega)$ with $p\in (2, \infty)$. The proof proceeds by Picard's iterations in each of which a viscosity approximation is added: $\tt_n$, $n\ge 1$, is defined as the solution of the problem
\bq\label{invis:tt0}
\begin{cases}
\partial_t\tt_n+u_n\cdot\nabla\tt_n-\k\Delta\tt_n=0, \quad (x, t)\in \Omega\times(0, \infty),\quad\k>0,\\
u_n=R_D^\perp \tt_{n-1},\\
\tt_n\vert_{t=0}=\tt_0.
\end{cases}
\eq
We prove by induction that there exist 
\[
T_0=T_0(\Vert \tt_0\Vert_{H^1_0\cap W^{2, p}}, p)>0, \quad M_0=M_0(\Vert \tt_0\Vert_{H^1_0\cap W^{2, p}}, p)>0,
\]
 both are independent of $n$ and $\k$, such that 
\bq\label{invis:1}
\tt_n\in L^\infty([0, T_0]; H^1_0(\Omega)\cap W^{2, p}(\Omega))
\eq
and
\bq\label{invis:2}
\Vert \tt_n\Vert_{L^\infty([0, T_0]; W^{2, p}(\Omega))}\le M_0.
\eq
When $n=0$, both \eqref{invis:1} and \eqref{invis:2} hold for any $T_0>0$. Assume they hold for $n\le k-1$, $k\ge 1$, we prove it for $n=k$. The regularity \eqref{invis:1} of $\tt_k$ will be obtained by three bootstraps: $H^2$, then $W^{2, q}$ with $q\in (2, p)$, and finally $W^{2, p}$.

{\bf Step 1.} $H^2$ regularity. We note that $\Delta u_k=R_D^\perp\Delta \tt_{k-1}\in L^p(\Omega)$. On the other hand, by Sobolev's embedding $\tt_{k-1}\in C^{1, \gamma}(\overline\Omega)$ for some $\gamma>0$, and $\gamma_0(\tt_{k-1})=0$, Proposition 3.1 \cite{CabTan} then yields $\L^{-1}\tt_{k-1}\in C^{2, \gamma}(\overline\Omega)$, and thus $u_k\in C^{1, \alpha}(\overline\Omega)\subset W^{1, \infty}(\Omega)$.  Thus, 
\bq\label{est:u1}
\Vert \Delta u_k\Vert_{L^p(\Omega)}+\Vert u_k\Vert_{W^{1, \infty}(\Omega)}\le C\Vert \tt_{k-1}\Vert_{W^{2, p}(\Omega)}.
\eq
Note however that we do not have $u_k\in W^{2, p}(\Omega)$ in general but only $u_k\in W^{2, p}_{loc}(\Omega)$, by interior elliptic estimates. Then according to Theorem \ref{wp:ade}, the transport problem \eqref{invis:tt0} has a unique solution
\[
\tt_k\in L^\infty([0, T]; D(\L^2))\cap L^2([0, T]; D(\L^4))
\]
for any $T>0$ and 
\bq\label{est:tt1}
\begin{aligned}
\Vert \tt_k\Vert_{L^\infty([0, T]; D(\L^2))}+\k\Vert \tt_k\Vert_{L^2([0, T]; D(\L^{2+\alpha}))}&\le C\Vert \tt_0\Vert_{2, D}\exp\big(C\Vert \tt_{k-1}\Vert_{L^1([0, T]; W^{2, p})}\big)\\
&\le C\Vert \tt_0\Vert_{2, D}\exp\big(CT\Vert \tt_{k-1}\Vert_{L^\infty([0, T]; W^{2, p})}\big).
\end{aligned}
\eq
{\bf Step 2.} $W^{2, q}$ regularity.  Fix $q\in (2, p)$. We observe that $w_k=\Delta \tt_k$ satisfies
\bq\label{eq:w1}
\partial_t w_k+u_k\cdot\nabla w_k-\k\Delta w_k=-\Delta u_1\nabla\tt_k-2\nabla u_k\cdot\nabla\nabla\tt_k.
\eq
It follows from \eqref{est:u1}, \eqref{est:tt1}, and the embeddings $D(\L^2)\subset H^2(\Omega)\subset W^{1, r}(\Omega)$ for any $r<\infty$, that
\begin{align}\label{source:w1}
\Vert \Delta u_k\nabla\tt_k\Vert_{L^q(\Omega)}&\le \Vert \Delta u_k\Vert_{L^p(\Omega)}\Vert \nabla\tt_k\Vert_{L^r(\Omega)}\\
&\le C\Vert \tt_{k-1}\Vert_{W^{2, p}(\Omega)}\Vert \tt_0\Vert_{2, D}\exp\big(CT\Vert \tt_{k-1}\Vert_{L^\infty([0, T]; W^{2, p})}\big),
\end{align}
here $\frac 1q=\frac 1p+\frac 1r$.

In addition, because $\gamma_0(\tt_k)=0$ and $\tt_k\in D(\L^4)\subset H^4(\Omega)$, elliptic estimates combined with \eqref{est:tt1} imply
\bq\label{ell:w1}
\Vert \nabla\nabla\tt_k\Vert_{L^q(\Omega)}\le \Vert \tt_k\Vert_{W^{2, q}(\Omega)}\le C\Vert \Delta\tt_k\Vert_{L^q(\Omega)}=C\Vert w_k\Vert_{L^q(\Omega)}.
\eq
Now we multiply \eqref{eq:w1} by $q|w_k|^{q-2}w_k$, using the inequality \eqref{CorCor}, the fact that $\cn u_k=0$, and \eqref{ell:w1} to get
\begin{align*}
\frac{d}{dt}\Vert w_k\Vert^q_{L^q_x}&\le q \Vert \Delta u_k\nabla\tt_k\Vert_{L^q_x}\Vert w_k\Vert_{L^q_x}^{q-1}+2q\Vert \nabla u_k\Vert_{L^\infty_x}\Vert \nabla\nabla\tt_k\Vert_{L^q_x}\Vert w_k\Vert_{L^q_x}^{q-1}\\
&\le q \Vert \Delta u_k\nabla\tt_k\Vert_{L^q_x}\Vert w_k\Vert_{L^q_x}^{q-1}+qC\Vert \nabla u_k\Vert_{L^\infty_x}\Vert w_k\Vert_{L^q_x}^q.
\end{align*}
Consequently, for any $T>0$,
\begin{align*}
&\Vert w_k\Vert_{L^\infty([0, T]; L^q)}\\
&\le C\big(\Vert w_k(0)\Vert_{L^q}+\Vert \Delta u_k\nabla\tt_k\Vert_{L^1([0, T]; L^q)}\big)\exp\big(C\Vert \nabla u_k\Vert_{L^1([0, T]; L^\infty)} \big)\\
&\le \left(\Vert \tt_0\Vert_{W^{2,q}}+CT\Vert \tt_{k-1}\Vert_{L^\infty([0, T]; W^{2, p})}\Vert \tt_0\Vert_{2, D}\exp\big(CT\Vert \tt_{k-1}\Vert_{L^\infty([0, T]; W^{2, p})}\big)\right)\exp\big(CT\Vert \tt_{k-1}\Vert_{L^\infty([0, T]; W^{2, p})}\big)\\
&\le \cF(\Vert \tt_0\Vert_{W^{2,q}}+T\Vert \tt_{k-1}\Vert_{L^\infty([0, T]; W^{2, p})})
\end{align*}
for some increasing function $\cF:\Rr^+\to \Rr^+$, where \eqref{source:w1}, \eqref{est:u1} were used. In what follows, $\cF$ may change from line to line but is independent of $k$ and $\kappa$.\\
As in \eqref{ell:w1}, elliptic estimates yield
\[
\Vert \tt_k\Vert_{L^\infty([0, T]; W^{2, q})}\le C\Vert w_k\Vert_{L^\infty([0, T]; L^q)}\le \cF(\Vert \tt_0\Vert_{W^{2,q}}+T\Vert \tt_{k-1}\Vert_{W^{2, q}}).
\]
{\bf Step 3.} $W^{2, p}$ regularity. By the Sobolev embedding $W^{2, q}(\Omega)\subset W^{1, \infty}(\Omega)$, we have
\[
\Vert \tt_k\Vert_{L^\infty([0, T]; W^{1, \infty})}\le \cF(\Vert \tt_0\Vert_{W^{2, q}}+T\Vert \tt_{k-1}\Vert_{L^\infty([0, T]; W^{2, p})})
\]
which, combined with \eqref{est:u1}, implies 
\begin{align*}
\Vert \Delta u_k\nabla\tt_k\Vert_{L^\infty([0, T]; L^p)}&\le \Vert \Delta u_k\Vert_{L^\infty([0, T]; L^p)}\Vert \nabla\tt_k\Vert_{L^\infty([0, T]; L^\infty)}\\
&\le C\Vert \tt_{k-1}\Vert_{L^\infty([0, T]; W^{2, p})}\cF(\Vert \tt_0\Vert_{W^{2,q}}+T\Vert \tt_{k-1}\Vert_{L^\infty([0, T]; W^{2, p})}).
\end{align*}
Then, multiplying \eqref{eq:w1} by $p|w_k|^{p-2}w_k$ and argue as above leads to the $L^p$ bound
\begin{align*}
\Vert w_k\Vert_{L^\infty([0, T]; L^p)}&\le C\big(\Vert w_k(0)\Vert_{L^p}+\Vert \Delta u_k\nabla\tt_k\Vert_{L^1([0, T]; L^p)}\big)\exp\big(\Vert \nabla u_k\Vert_{L^1([0, T]; L^\infty)} \big)\\
& \le \cF(\Vert \tt_0\Vert_{W^{2, p}(\Omega)}+T\Vert \tt_{k-1}\Vert_{L^\infty([0, T]; W^{2, p})}).
\end{align*}
By elliptic estimates, we obtain that
\[
\Vert \tt_k\Vert_{L^\infty([0, T]; W^{2, p})}\le  \cF(\Vert \tt_0\Vert_{W^{2, p}(\Omega)}+T\Vert \tt_{k-1}\Vert_{L^\infty([0, T]; W^{2, p})}).
\]
{\bf Step 4.} Concluding. Now by the induction hypothesis, 
\[
\Vert \tt_{k-1}\Vert_{L^\infty([0, T_0]; W^{2, p})}\le M_0,
\]
with $T_0=T_0(\Vert \tt_0\Vert_{H^1_0\cap W^{2, p}}, p)>0$, $M_0=M_0(\Vert \tt_0\Vert_{H^1_0\cap W^{2, p}}, p)>0$ . Therefore, if we choose 
\[
M_0\ge \cF(2\Vert \tt_0\Vert_{W^{2, p}(\Omega)}),\quad T_0\le \frac{\Vert \tt_0\Vert_{W^{2, p}(\Omega)}}{M_0}\le \frac{\Vert \tt_0\Vert_{W^{2, p}(\Omega)}}{\cF(2\Vert \tt_0\Vert_{W^{2, p}(\Omega)})}
\]
then
\[
\cF\big(\Vert \tt_0\Vert_{W^{2, p}(\Omega)}+T_0M_0\big)\le M_0,
\]
and thus 
\bq\label{ubi:1}
\Vert \tt_k\Vert_{L^\infty([0, T_0]; W^{2, p})}\le M_0.
\eq
This completes the proof of \eqref{invis:1} and \eqref{invis:2}. Then, using the first equation in \eqref{invis:tt0}, \eqref{est:u1}, \eqref{est:tt1}, it follows easily that
\bq\label{ubi:2}
\Vert \partial_t\tt_n\Vert_{L^\infty([0, T_0]; L^2)}\le M_1
\eq
for some $M_1>0$ independent of $n$ and $\kappa$.

Using the uniform bounds \eqref{invis:2}, \eqref{ubi:2}, we can first pass to the limit $n\to 0$ by virtue of the Aubin-Lions lemma, then send $\k\to 0$ to obtain a solution 
\[
\tt\in L^\infty([0, T_0]; H^1_0(\Omega)\cap W^{2, p}(\Omega))
\]
to the inviscid SQG equation. Finally, uniqueness follows easily by an $L^2$ energy estimate for the difference of two solutions as done in section \ref{section:unique}, noticing that $\nabla\tt\in L^\infty_tW^{1, p}_x\subset L^\infty_{t,x}$ with $p>2$.
\section{Appendix 1: $L^p$ bounds}
Let $\Omega\subset \Rr^2$ be an open set with smooth boundary.
\begin{prop}\label{prop:Lr}
Let $\alpha\in (0, 1]$ and $\k> 0$. Let $u\in L^\infty([0, T]; L^2(\Omega)^2)$ be a divergence-free vector field and consider the linear advection-diffusion equation
\bq\label{eq:Lp}
\partial_t\tt+u\cdot\nabla \tt+\k\L^{2\alpha}\tt=0,\quad\tt\vert_{t=0}=\tt_0.
\eq
(i) If $\alpha\in (\mez, 1)$ and 
\bq\label{Lp:reg}
\tt\in L^\infty\big([0, T]; D(\L^2)\big)\cap L^2\big([0, T]; D(\L^{2+\alpha})\big)
\eq
is a solution of \eqref{eq:Lp} then we have for any $r\in [4, \infty]$
\bq\label{app:Lr}
\Vert \tt\Vert_{L^\infty([0, T]; L^r(\Omega))}\le \Vert \tt_0\Vert_{L^r(\Omega)}.
\eq
(ii) If $\alpha\in (0, \mez]$ and 
\bq\label{Lp:reg:sub}
\tt\in L^\infty\big([0, T]; D(\L^2)\big)
\eq
is a solution of \eqref{eq:Lp} then \eqref{app:Lr} holds for any $r\in [2, \infty]$.
\end{prop}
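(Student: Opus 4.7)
The strategy is to perform an $L^r$ energy estimate via the multiplier $r|\tt|^{r-2}\tt$, exploiting the incompressibility of $u$ for the transport term and a bounded-domain Córdoba--Córdoba type inequality for the fractional dissipation. The case $r=\infty$ is obtained in the end by sending $r\to\infty$.

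For finite $r$, I would multiply \eqref{eq:Lp} by $r|\tt|^{r-2}\tt$ and integrate over $\Omega$. The transport contribution equals $\int_\Omega u\cdot\nabla(|\tt|^r)\,dx$ and vanishes after integration by parts because $\cn u=0$ and $\tt\vert_{\partial\Omega}=0$ (since $\tt(t)\in D(\L^2)\subset H^1_0(\Omega)$, so $|\tt|^r\in W^{1,1}_0(\Omega)$ by the chain rule for $r\ge 2$). For the fractional dissipation I would invoke the pointwise Córdoba--Córdoba inequality \eqref{CorCor} adapted to the spectral Dirichlet Laplacian, which in integrated form gives
\[
\int_\Omega r|\tt|^{r-2}\tt\,\L^{2\alpha}\tt\,dx\ge 0.
\]
Combined, these yield $\frac{d}{dt}\|\tt(t)\|_{L^r}^r\le 0$, which integrates to \eqref{app:Lr}.

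The split between cases (i) and (ii) reflects the Sobolev embeddings available to make the pairing $\langle|\tt|^{r-2}\tt,\L^{2\alpha}\tt\rangle$ rigorous. In case (ii), $\alpha\le\mez$, and since in two dimensions $\L^{2\alpha}\tt\in D(\L^{2-2\alpha})\hookrightarrow H^{2-2\alpha}$ with $2-2\alpha\ge 1$, one has $\L^{2\alpha}\tt\in L^r$ for every $r\in[2,\infty]$, and the computation closes directly for all such $r$. In case (i), $\alpha>\mez$ and the embedding of $H^{2-2\alpha}$ is strictly restricted to $r\le 2/(2\alpha-1)$; the threshold $r\ge 4$ corresponds to the worst value $\alpha\to 1^-$, and the extra $L^2_tD(\L^{2+\alpha})$ integrability is used to bring $\L^{2\alpha}\tt$ into $L^\infty$ almost everywhere in time in order to pair with $|\tt|^{r-2}\tt\in L^{r/(r-1)}$. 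For $r=\infty$ I would then pass to the limit $r\to\infty$ in $\|\tt(t)\|_{L^r}\le\|\tt_0\|_{L^r}$, using that $|\Omega|<\infty$ and that $\tt_0\in D(\L^2)\subset H^2\subset L^\infty$ in two dimensions.

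I expect the main technical difficulty to be the rigorous justification of the multiplier $r|\tt|^{r-2}\tt$, since a priori neither $\partial_t\tt$ nor $\L^{2\alpha}\tt$ is known to pair with it classically. The fix is a standard smoothing/truncation argument: either project with the Galerkin operator $\P_m$, or regularize the convex function $x\mapsto|x|^r$ near the origin (replacing it by a $C^2$ approximation), verify the monotonicity identity at the regularized level using the available regularity in \eqref{Lp:reg} or \eqref{Lp:reg:sub}, and pass to the limit. Once this approximation is in place and the bounded-domain Córdoba--Córdoba inequality is applied, the monotonicity of $\|\tt(t)\|_{L^r}$ is immediate and \eqref{app:Lr} follows.
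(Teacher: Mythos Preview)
Your overall strategy---multiply by $r|\tt|^{r-2}\tt$, kill the transport term via incompressibility, use the bounded-domain C\'ordoba--C\'ordoba inequality for the dissipation, then let $r\to\infty$---is exactly what the paper does. However, your explanation for the case split and for the threshold $r\ge 4$ is wrong, and you gloss over the one nontrivial algebraic step.

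The restrictions on $r$ have nothing to do with Sobolev embeddings needed to make the pairing $\int |\tt|^{r-2}\tt\,\L^{2\alpha}\tt$ rigorous: since $\tt(t)\in D(\L^2)\subset H^2(\Omega)\subset L^\infty(\Omega)$ in two dimensions, both factors are bounded and the pairing is fine for every $r\ge 1$; your own remark that $\L^{2\alpha}\tt\in L^\infty$ a.e.\ in time already shows there is no obstruction there. The actual source of the thresholds is how one extracts $\int_\Omega r|\tt|^{r-2}\tt\,\L^{2\alpha}\tt\,dx\ge 0$ from the pointwise inequality $\Phi'(f)\L^{2\alpha}f\ge\L^{2\alpha}\Phi(f)$. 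Applying it directly with $\Phi(z)=|z|^r$ leaves one with $\int_\Omega\L^{2\alpha}|\tt|^r\,dx$, whose nonnegativity is not immediate. The paper instead writes $r|\tt|^{r-2}\tt=2|\tt|^m\cdot m|\tt|^{m-2}\tt$ with $m=r/2$, applies C\'ordoba--C\'ordoba with $\Phi(z)=|z|^m$, multiplies by the nonnegative factor $|\tt|^m$, and integrates to get
\[
\int_\Omega r|\tt|^{r-2}\tt\,\L^{2\alpha}\tt\,dx\;\ge\;2\int_\Omega|\tt|^m\,\L^{2\alpha}|\tt|^m\,dx\;=\;2\Vert\L^{\alpha}|\tt|^m\Vert_{L^2}^2\;\ge\;0.
\]
The case split is then dictated by the smoothness of $\Phi$ required in the C\'ordoba--C\'ordoba inequality: for $2\alpha\in(1,2)$ one needs $\Phi\in C^2$, which for $\Phi(z)=|z|^m$ forces $m\ge 2$, i.e.\ $r\ge 4$; for $2\alpha\in(0,1]$ it suffices that $\Phi\in C^1$, giving $m\ge 1$, i.e.\ $r\ge 2$. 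The extra $L^2_tD(\L^{2+\alpha})$ regularity in case~(i) plays no role in this argument.
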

\begin{proof}
We first note that in both cases, equation \eqref{eq:Lp} is satisfied in $L^2([0, T]; L^r(\Omega))$ for any $r\in [1, \infty]$. Therefore, $\tt\in C([0, T]; L^r(\Omega))$ for any $r\in [1, \infty]$.

(i) Case 1: $\alpha\in (\mez, 1)$ and  $r\in [4, \infty]$. It suffices to consider $r\in [4, \infty)$ because the case $r=\infty$ follows by sending $r\to \infty$. We have 
\[
\frac{d}{dt}\Vert \tt\Vert_{L^r}^r=\int_\Omega r|\tt|^{r-2}\tt\partial_t\tt=-\int_\Omega  u\cdot\nabla|\tt|^r dx-\k\int_\Omega r|\tt|^{r-2}\tt \L^{2\alpha}\tt dx. 
\]
In two dimensions, the condition $\tt\in D(\L^2)$ implies $|\tt|^r\in H^1_0(\Omega)$. Since $u$ is divergence-free, the first term on the right-hand side vanishes in view of the Stokes formula. Regarding the dissipative term, we use the C\'ordoba-C\'ordoba inequality (\cite{CorCor}, see also \cite{Res}) which was proved for bounded domains in (\cite{ConIgn}):
\bq\label{CorCor}
\Phi'(f)\L^sf-\L^s(\Phi(f))\ge 0,\quad s\in [0, 2],
\eq
almost everywhere in $\Omega\subset \Rr^2$ for $f\in H^1_0(\Omega)\cap H^2(\Omega)$ and $C^2(\Rr)$ convex $\Phi$ satisfying $\Phi(0)=0$. Note that in two dimensions, $f\in L^\infty(\Omega)$ and $\Phi(f)\in H^1_0(\Omega)\cap H^2(\Omega)$, hence each term in \eqref{CorCor} is well defined in $L^2(\Omega)$. Under condition \eqref{Lp:reg}, with $\Phi(z)=|z|^m\in C^2$, $m=\frac r2\ge 2$, we have
\begin{align*}
\int_\Omega r|\tt|^{r-2}\tt \L^{2\alpha}\tt dx&= 2\int_\Omega |\tt|^m m|\tt|^{m-2}\tt\L^{2\alpha}\tt dx\\
&\ge 2\int_\Omega |\tt|^m \L^{2\alpha}|\tt|^m dx\\
&=2\int_\Omega \vert \L^{\alpha}|\tt|^m\vert^2 dx\ge 0.
\end{align*}
Consequently $\frac{d}{dt}\Vert \tt\Vert_{L^r}^r\le 0$ and \eqref{app:Lr} follows. 

(ii) Case 2: $\alpha\in (0, \mez]$ and $r\in [2, \infty]$. If $s\in [0, 1]$ it suffices to assume $f\in H^1_0(\Omega)\cap H^s(\Omega)$ with $s>1$ and $\Phi\in C^1(\Rr)$ convex to get the inequality \eqref{CorCor}. Indeed, we then have $\Phi(f)\in H^1_0(\Omega)=D(\L^1)$ and thus $\L^s(\Phi(f))$ belongs to $L^2(\Omega)$. Therefore, \eqref{app:Lr} holds for any $r\ge 2$ by choosing $\Phi(z)=|z|^{\frac r2}\in C^1$ as in (i). 
\end{proof}
\section{Appendix 2: Linear advection-difussion}
Let $\Omega\subset \Rr^d$, $d\ge 2$, be an open set with smooth boundary. Let $\alpha\in (0, 1]$ and $\k\ge 0$. Let $u$ be a  vector field on $\Omega$ and  consider the linear advection-diffusion equation of $\tt$,
\bq\label{ade}
\partial_t\tt+u\cdot\nabla \tt+\k\L^{2\alpha}\tt=0.
\eq
Define 
\bq
B(\Omega)=
\begin{cases}
\left\{v\in L^2(\Omega): \nabla v\in L^\infty(\Omega), \Delta v\in L^q(\Omega), q>2\right\}  \quad\text{if}~d=2,\\
\left\{v\in L^2(\Omega): \nabla v\in L^\infty(\Omega), \Delta v\in L^2(\Omega)\right\} \quad\text{if}~d\ge 3
\end{cases}
\eq
endowed with its natural norm. We prove (see also \cite{ConIgn})
\begin{theo}\label{wp:ade} 
Assume that $u$ is divergence-free and parallel to the boundary, {\it i.e.} $\gamma(u)=0$. 

1. (Existence) Assume $u\in L^1([0, T]; B(\Omega)^d)$ with $T>0$. Equation \eqref{ade} with initial data $\tt_0\in D(\L^2)$ has a solution $\tt$ satisfying
\[
\Vert \tt\Vert_{L^\infty([0, T]; D(\L^2))}+\k \Vert\tt\Vert_{L^2([0, T]; D(\L^{2+\alpha}))}\le C\Vert \tt_0\Vert_{2, D}\exp\big(C\Vert u\Vert_{L^1([0, T]; B(\Omega))}\big).
\]
2. (Uniqueness) Assume $u\in L^2([0, T]; L^\infty(\Omega)^d)$. Equation \eqref{ade} has at most one  weak solution $\tt\in L^\infty([0, T]; L^2(\Omega))$ satisfying
\[
\tt\in L^2([0, T]; H^1_0(\Omega)\cap L^\infty(\Omega)).
\]
\end{theo}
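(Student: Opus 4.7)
The approach is a Galerkin scheme in the Dirichlet eigenbasis of $-\Delta$, in the spirit of section \ref{section:local}. Let $\tt_m=\sum_{j=1}^m c_j^{(m)}(t)w_j$ solve
\[
\partial_t\tt_m+P_m(u\cdot\nabla\tt_m)+\k\L^{2\alpha}\tt_m=0,\qquad \tt_m(0)=P_m\tt_0,
\]
which is a globally solvable linear ODE for each fixed $m$, since $u\in L^1([0,T];B(\Omega))$ embeds into $L^1([0,T];L^\infty(\Omega))$ on a bounded domain. Each $\tt_m$ is spatially smooth at each time as a finite sum of eigenfunctions.

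The main step is a $D(\L^2)$ estimate. Since $\L^2\tt_m=-\Delta\tt_m$ lies in $P_m L^2(\Omega)$ and $P_m$ is self-adjoint, pairing the equation with $\L^4\tt_m=\Delta^2\tt_m$ gives
\[
\mez\frac{d}{dt}\|\Delta\tt_m\|_{L^2}^2+\k\|\L^{2+\alpha}\tt_m\|_{L^2}^2=-\int_\Omega (u\cdot\nabla\tt_m)\,\Delta^2\tt_m\,dx.
\]
I reduce the right-hand side to a pure commutator by integrating by parts twice; the structural point is that all boundary contributions vanish. Indeed, $u\cdot\nabla\tt_m=0$ on $\partial\Omega$ by the argument of Lemma \ref{lemm:trace} (using $\gamma(u)=0$ together with $\tt_m\in H^1_0$, so that $\nabla\tt_m$ is normal to $\partial\Omega$), while $\Delta\tt_m|_{\partial\Omega}=0$ follows from the eigenvalue identity $\Delta w_j=-\lambda_j w_j$ combined with $w_j|_{\partial\Omega}=0$. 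The residual transport-type term $\int (u\cdot\nabla\Delta\tt_m)\,\Delta\tt_m\,dx=\mez\int u\cdot\nabla(\Delta\tt_m)^2\,dx$ vanishes by $\cn u=0$. What remains is
\[
\mez\frac{d}{dt}\|\Delta\tt_m\|_{L^2}^2+\k\|\L^{2+\alpha}\tt_m\|_{L^2}^2=-\int_\Omega ([\Delta,u\cdot\nabla]\tt_m)\,\Delta\tt_m\,dx,
\]
and using $[\Delta,u\cdot\nabla]\tt_m=\Delta u\cdot\nabla\tt_m+2\nabla u\cdot\nabla\nabla\tt_m$, H\"older, Sobolev embeddings, and elliptic regularity $\|\tt_m\|_{H^2}\le C\|\Delta\tt_m\|_{L^2}$ (case-split by dimension, which is exactly why $B(\Omega)$ is defined differently for $d=2$ and $d\ge 3$) produce
\[
\|[\Delta,u\cdot\nabla]\tt_m\|_{L^2}\le C\|u\|_{B(\Omega)}\|\tt_m\|_{2,D}.
\]
Gr\"onwall then gives the claimed a priori bound. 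Passage to the limit is by Aubin–Lions: the equation plus this bound yields $\partial_t\tt_m$ uniformly bounded in $L^2([0,T];L^2(\Omega))$, hence strong compactness in $L^2([0,T];D(\L^s))$ for some $s<2$, which is enough to identify the limit as a solution with the announced regularity and bound.

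For uniqueness, I would set $\tt=\tt_1-\tt_2$, which satisfies the equation with zero data, and test against $\tt$ itself. The transport term satisfies $\int_\Omega(u\cdot\nabla\tt)\tt\,dx=\mez\int_\Omega u\cdot\nabla(\tt^2)\,dx=0$ thanks to $\cn u=0$ and $\gamma(u)=0$, the integration by parts being legitimate because $\tt^2\in W^{1,1}_0(\Omega)$ (from $\tt\in L^2_t(H^1_0\cap L^\infty)$) and $u\in L^2_t L^\infty$ makes $u\tt^2\in L^1_{t,x}$; the dissipative term $\int_\Omega\tt\,\L^{2\alpha}\tt\,dx=\|\L^\alpha\tt\|_{L^2}^2$ is nonnegative. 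Hence $\|\tt(t)\|_{L^2}\equiv 0$.

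The heart of the matter, and the main obstacle, is the rigorous justification of the double integration by parts that sends $\int(u\cdot\nabla\tt_m)\Delta^2\tt_m\,dx$ to the commutator expression. The Galerkin truncation in the Dirichlet eigenbasis is essential rather than merely convenient: the eigenvalue identity forces $\Delta\tt_m|_{\partial\Omega}=0$, which kills the boundary term $\int_{\partial\Omega}\Delta\tt_m\,\partial_\nu(u\cdot\nabla\tt_m)\,d\sigma$ that would otherwise require regularity on $u$ beyond what $B(\Omega)$ provides. A secondary, purely technical point is the dimension-dependent tuning of H\"older exponents in the commutator bound, which is exactly what motivates the split definition of $B(\Omega)$.
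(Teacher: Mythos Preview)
Your proposal is correct and follows essentially the same approach as the paper's own proof: Galerkin approximations in the Dirichlet eigenbasis, the $D(\L^2)$ energy identity reduced to the commutator $[\Delta,u\cdot\nabla]\tt_m$ via the boundary vanishing of both $u\cdot\nabla\tt_m$ and $\Delta\tt_m$, the $B(\Omega)$ commutator bound, Gr\"onwall, Aubin--Lions, and the $L^2$ energy estimate for uniqueness. One minor correction: with only $u\in L^1([0,T];B(\Omega))$ the transport term gives $\partial_t\tt_m$ bounded merely in $L^1([0,T];L^2(\Omega))$, not $L^2$, but this still suffices for Aubin--Lions.
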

\begin{proof}
1. We proceed as in section 3.1 using the Galerkin approximations. It suffices to derive a priori bounds for $\tt_m\in P_m L^2$ solution to 
\bq\label{Gal:ade}
\begin{cases}
\dot \tt_m+\P_m(u\cdot\nabla\tt_m)+\k \Lambda^{2\alpha}\tt_m=0&\quad t>0,\\
\tt_m=P_m\tt_0&\quad t=0.
\end{cases}
\eq
As in Lemma \ref{lemm:trace}, $u\cdot \nabla\tt_m\in H^1_0(\Omega)$, and hence $u\cdot \nabla\tt_m\in D(\L^2)$. Applying in the first equation of \eqref{Gal:ade} $\L^2=-\Delta$, then taking the scalar product with $\L^2 \tt_m$ and taking into account the fact that $P_m$ is self-adjoint and commutes with $\L^2$ on $D(\L^2)$ we obtain
\begin{align*}
\mez\frac{d}{dt}\Vert \tt_m\Vert_{2, D}^2+\k\Vert \tt_m\Vert_{2+\alpha, D}^2&=\int_\Omega -\Delta(u\cdot\tt_m)\Delta \tt_m dx\\
&=\int_\Omega u\cdot\nabla\L^2\tt_m\L^2 \tt_m dx+\int_\Omega [\L^2, u\cdot\nabla]\tt_m\L^2\tt_m dx.
\end{align*}
Since $\L^2\tt_m$ vanishes on the boundary $\partial\Omega$ and $u$ is divergence-free, an integration by parts gives
\[
\int_\Omega u\cdot\nabla\L^2\tt_m\L^2 \tt_m dx=\mez\int_\Omega u\cdot\nabla\la\L^2\tt_m\ra^2 dx=0.
\]
We recall from \eqref{cmtt:formula} that
\[
[\Delta, u\cdot\nabla]\tt_m=\Delta u\cdot \nabla \tt_m+2\nabla u\cdot \nabla\nabla \tt_m,
\]
hence
\[
\Vert [\Delta, u\cdot\nabla]\tt_m\Vert_{L^2}\le C\Vert u\Vert_{B(\Omega)}\Vert\tt_m\Vert_{H^2}\le C\Vert u\Vert_{B(\Omega)}\Vert\tt_m\Vert_{2, D}.
\]
We obtain thus
\[
\Vert \tt_m\Vert_{L^\infty([0, T]; D(\L^2))}+\k\Vert \tt_m\Vert_{L^2([0, T]; D(\L^{2+\alpha}))}\le C\Vert \tt_0\Vert_{2, D}\exp\big(C\Vert u\Vert_{L^1([0, T]; B(\Omega))}\big).
\]
Passing to the limit $m\to\infty$ can be done by means of the Aubin-Lions lemma (\cite{Lions}). 

2. Under the assumed regularity of $u$ and $\tt$, equation \eqref{ade} is satisfied in $L^2([0, T]; H^{-1}(\Omega))$:
\[
\partial_t\tt+\cnx(u\tt)+\k\L^{2\alpha}\tt=0.
\]
 In addition, $\tt\in L^\infty([0, T]; H^1_0(\Omega))\subset L^2([0, T]; H^1_0(\Omega))$, hence $\tt\in C([0, T]; L^2(\Omega))$ and for {\it a.e.} $t\in [0, T]$ (see Chapter 2, \cite{BoFa})
\[
\mez\frac{d}{dt}\Vert \tt\Vert^2_{L^2}=\langle \partial_t\tt, \tt\rangle_{H^{-1}, H^1}=-\langle \cnx(u\tt), \tt\rangle_{H^{-1}, H^1_0}-\k\langle \L^{2\alpha}\tt, \tt\rangle_{H^{-1}, H^1_0}=(u\tt\cdot, \nabla\tt)-\k\Vert\tt\Vert_{D(\L^\alpha)}^2.
\]
Since $\tt\in H^1(\Omega)\cap L^\infty(\Omega)$, $|\tt|^2\in H^1(\Omega)$. The Stokes formula  then yields
\[
 (u\tt\cdot,\nabla\tt)=(u\cdot, \mez \nabla |\tt|^2)=-(\cnx u, |\tt|^2)=0
\]
for $\cnx u=0$ and $\gamma(u)=0$. Consequently,
\[
\frac{d}{dt}\Vert \tt(t)\Vert^2_{L^2}\le 0
\]
and thus $\tt(t)=0$ for $t\in [0, T]$ if $\tt(0)=0$.
\end{proof}

{\bf{Acknowledgment.}} The work of PC was partially supported by  NSF grant DMS-1209394

\end{document}